\documentclass[10pt]{amsart}
\usepackage{amsthm, amsfonts, amssymb, amsmath, amscd}
\usepackage[all]{xy}
\usepackage[margin=3.5cm]{geometry}

\newcommand{\C}{\mathbb{C}}
\newcommand{\R}{\mathbb{R}}

\newcommand{\G}{\Gamma}
\newcommand{\F}{\mathbb{F}_n}


\newcommand{\e}{\varepsilon}
\newcommand{\noin}{\noindent}
\newcommand{\la}{\langle}
\newcommand{\ra}{\rangle}
\newcommand{\bd}{\partial}

\newcommand{\dhat}{\hat{d}}
\newcommand{\lhat}{\hat{l}}
\newcommand{\D}{\:\mathrm{d}}
\newcommand{\diam}{\mathrm{diam}\:}
\newcommand{\dist}{\mathrm{dist}}
\newcommand{\Mob}{\mathrm{M\ddot{o}b}}
\newcommand{\cleq}{\preccurlyeq}
\newcommand{\cgeq}{\succcurlyeq}

\newtheorem{thm}{Theorem}[section]
\newtheorem{lem}[thm]{Lemma}
\newtheorem{cor}[thm]{Corollary}
\newtheorem{prop}[thm]{Proposition}

\theoremstyle{definition}

\newtheorem{rem}[thm]{Remark}
\newtheorem{ex}[thm]{Example}

\begin{document}
\title{Proper isometric actions of hyperbolic groups on $L^p$-spaces}
\author{Bogdan Nica}
\address{Mathematisches Institut, Georg-August Universit\"at G\"ottingen\newline \phantom{MM} Bunsenstrasse 3--5, D-37073 G\"ottingen, Germany}
\email{bogdan.nica@gmail.com}
\date{\today \\
\phantom{nn} 2010 Mathematics Subject Classification: 20F67 (primary), 20J06, 30L10, 43A15 (secondary)\\
\phantom{nn} Keywords: isometric actions on Banach spaces, hyperbolic groups, Bowen - Margulis measure, $\ell^p$-cohomology}
\begin{abstract}
\noin We show that every non-elementary hyperbolic group $\G$ admits a proper affine isometric action on $L^p(\bd\G\times \bd\G)$, where $\bd\G$ denotes the boundary of $\G$ and $p$ is large enough. Our construction involves a $\G$-invariant measure on $\bd\G\times \bd\G$ analogous to the Bowen--Margulis measure from the CAT$(-1)$ setting, as well as a geometric, Busemann-type cocycle. We also deduce that $\G$ admits a proper affine isometric action on the first $\ell^p$-cohomology group $H^1_{(p)}(\G)$ for large enough $p$.
\end{abstract}
\maketitle

\section{Introduction}
With respect to the geometry of $L^2$-spaces, the class of hyperbolic groups appears to be both ``soft'' and ``rigid''. This ambivalence is vividly illustrated by cocompact lattices in isometry groups of rank-1 symmetric spaces. For real or complex hyperbolic spaces, cocompact lattices admit proper isometric actions on Hilbert spaces. For quaternionic hyperbolic spaces or the octonionic hyperbolic plane, every isometric action of a cocompact lattice on a Hilbert space is bounded. In other words, cocompact lattices enjoy the Haagerup property, or a-T-menability, in the real or complex case (see \cite{CCJJV}), respectively Kazhdan's property (T), or Serre's property FH, in the quaternionic or octonionic case (see \cite{BHV}).

When tested against general $L^p$-spaces, hyperbolic groups reveal themselves to be ``soft'': every hyperbolic group admits a proper isometric action on an $L^p$-space, where $p$ depends on the group. This fact, due to Yu, is one of the most interesting results in the study of isometric group actions on uniformly convex Banach spaces. More precisely, the following is shown in \cite{Yu}: 

\begin{thm}[Yu]\label{yu's theorem}
Let $\G$ be a hyperbolic group. Then, for large enough $p$, the linear isometric action of $\G$ on $\ell^p(\G\times\G)$ admits a proper cocycle.
\end{thm}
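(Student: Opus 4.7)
The plan is to build a concrete geometric cocycle $b \colon \G \to \ell^p(\G \times \G)$ using geodesic segments in a Cayley graph, and to deduce properness from the $\delta$-thin-triangle property of hyperbolic groups. Fix a finite generating set with word metric $|\cdot|$, and for each pair $(x,y) \in \G \times \G$ choose an oriented combinatorial geodesic $\gamma_{x,y}$ from $x$ to $y$; its oriented edges are pairs of adjacent points, hence naturally elements of $\G \times \G$. The naive candidate for the cocycle is $b_0(g) = \mathbf{1}_{E(\gamma_{e,g})} \in \ell^p(\G \times \G)$, where $E(\cdot)$ denotes the set of oriented edges. Under the diagonal regular representation $(\pi(g) \xi)(x,y) = \xi(g^{-1}x, g^{-1}y)$, the cocycle identity $b_0(gh) = b_0(g) + \pi(g) b_0(h)$ amounts to the edge-set equality $\gamma_{e, gh} = \gamma_{e, g} \cup g\gamma_{e, h}$, which generally fails. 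However, $\delta$-hyperbolicity forces the three sides of the geodesic triangle $(e, g, gh)$ to $2\delta$-fellow-travel outside a bounded neighborhood of an internal tripod vertex $v$, so the defect $b_0(gh) - b_0(g) - \pi(g) b_0(h)$ is supported in an $O(\delta)$-ball around $v$ and has $\ell^\infty$-norm bounded uniformly in $g, h$.

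To upgrade $b_0$ to an honest cocycle, one invokes an equivariant refinement of the geodesic system, for instance Mineyev's homological bicombing of a hyperbolic group, or a suitable averaging or symmetrization of geodesic choices. The resulting $b(g) \in \ell^p(\G \times \G)$ is still supported in an $O(\delta)$-neighborhood of a geodesic from $e$ to $g$, yielding the upper bound $\|b(g)\|_p^p \lesssim |g|$. For properness, the ``deep'' edges of $\gamma_{e, g}$, those far from both endpoints, cannot be canceled by any contribution from a shorter translate, since any such cancellation would violate the $\delta$-thinness of the relevant geodesic triangles; this gives the matching lower bound $\|b(g)\|_p^p \gtrsim |g|$, and hence $\|b(g)\|_p \to \infty$ as $|g| \to \infty$.

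The main obstacle, and the reason $p$ must be taken large, is producing this equivariant refinement while keeping the cocycle in $\ell^p$. The tripod defect of radius $O(\delta)$ must be spread out over infinitely many tripod positions as one varies $g, h$, and summability of the spread requires $p$ sufficiently large. Mineyev's bicombing is the standard device here: its quasigeodesic uniformity is precisely what dictates the lower bound on $p$, while the remaining steps reduce to straightforward edge-counting along geodesics combined with $\delta$-thinness estimates.
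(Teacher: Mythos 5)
A preliminary remark on scope: the paper does not prove Theorem~\ref{yu's theorem} at all --- it is quoted from Yu's article \cite{Yu} --- so your proposal can only be measured against Yu's published argument. In architecture it does match that argument: a geodesic/edge-type cochain corrected via Mineyev's construction, with properness extracted from the middle portion of a geodesic from $e$ to $g$. The paper itself notes that Mineyev's ``flower-averaging'' is the common technical point with its own boundary construction.

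There is, however, a genuine gap at the step where you control the cocycle defect. You assert that $b_0(gh)-b_0(g)-\pi(g)b_0(h)$ is supported in an $O(\delta)$-ball around the tripod vertex $v$. That is true in a tree but false in a general hyperbolic group: $\delta$-thinness says each side of the triangle stays within $2\delta$ of the union of the other two, \emph{not} that their edge sets agree. Two geodesics with the same endpoints at distance $n$ can $2\delta$-fellow-travel while having disjoint edge sets along their entire length (this already happens in surface groups), so the defect is supported along the whole triangle and can have $\ell^p$-norm of order $(|g|+|h|)^{1/p}$; the uniform $\ell^\infty$ bound you record is true but does not help. Hence $b_0$ is not a bounded perturbation of a cocycle, and there is no soft ``averaging or symmetrization'' fix. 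This is exactly the difficulty Mineyev's bicombing exists to overcome: one replaces points on geodesics by finitely supported probability measures $f(x,y)$ satisfying an exponential convergence estimate of the form $\|f(x,y)-f(x,y')\|_1\leq C\lambda^{d(x,y)}$ for $d(y,y')\leq 1$, and builds the cocycle from iterates of $f$; the requirement that $p$ be large then comes from making these exponentially decaying but spatially spreading tails $p$-summable against the exponential growth of balls --- not from ``spreading the tripod defect over infinitely many tripod positions.'' Since your proposal defers precisely this step to an unexamined invocation of Mineyev, the quantitative heart of the theorem (why some finite $p$ suffices and what determines it) is missing. By contrast, your properness mechanism --- deep positions along the geodesic from $e$ to $g$ each contribute a definite, uncancellable amount, giving $\|b(g)\|_p^p\succcurlyeq |g|$ --- is correct and is essentially how Yu concludes.
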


If we let a non-elementary hyperbolic group $\G$ act on its boundary $\bd\G$ rather than on itself, then a number of finiteness properties emerge at infinity. An example is the fact that, although $\G$ is non-amenable, the action of $\G$ on $\bd\G$ is amenable (S. Adams \cite{Ada}). In a recent joint work with Emerson \cite{EN}, we construct Fredholm modules for the $\mathrm{C}^*$-crossed product $C(\bd \G)\rtimes \G$ which are $p$-summable for every $p\in (2,\infty)$ greater than the visual dimension of $\bd\G$. Very informally, this means that the action of a hyperbolic group on its boundary is summable above the Hausdorff dimension of the boundary. This finiteness phenomenon is the inspiration for the following boundary analogue of Yu's theorem:

\begin{thm}\label{main} 
Let $\G$ be a non-elementary hyperbolic group and let $\bd\G$ denote its boundary. Then, for large enough $p$, the linear isometric action of $\G$ on $L^p(\bd\G\times\bd\G)$ admits a proper cocycle.
\end{thm}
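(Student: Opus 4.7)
The plan follows the abstract's indication: combine a Bowen--Margulis type measure on the double boundary with a Busemann-type geometric cocycle, in analogy with the CAT$(-1)$ picture. Fix a Patterson--Sullivan type measure $\nu$ of critical exponent $\delta$ on $\bd\G$ (these exist by Coornaert's work) and a visual metric $D$, and define
\[
dm(\xi,\eta) = \frac{d\nu(\xi)\,d\nu(\eta)}{D(\xi,\eta)^{2\delta}} \qquad \text{on } \bd^2\G := \bd\G \times \bd\G \setminus \mathrm{diag}.
\]
In the CAT$(-1)$ setting this $m$ is $\G$-invariant, thanks to the matching between the $\delta$-conformality of $\nu$ and the identity $\log(D(g\xi,g\eta)/D(\xi,\eta)) = \tfrac{1}{2}(\beta_\xi(g^{-1}o,o)+\beta_\eta(g^{-1}o,o))$; in the hyperbolic-group setting the same holds only up to bounded multiplicative error, and obtaining exact invariance will require some refinement, for instance through Mineyev-style bicombings. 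In parallel one constructs a genuine Busemann cocycle $c: \G \to C(\bd\G)$, informally $c(g)(\xi) = \beta_\xi(o, go)$, satisfying $c(gh) = c(g) + g \cdot c(h)$ for the action $(g \cdot f)(\xi) = f(g^{-1}\xi)$.

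With $m$ invariant, the permutation representation $(\pi(g)f)(\xi,\eta) = f(g^{-1}\xi, g^{-1}\eta)$ is isometric on $L^p(\bd^2\G, m)$. I would then set
\[
b(g)(\xi,\eta) := c(g)(\xi) - c(g)(\eta).
\]
The antisymmetric form is essential: $b(g)$ vanishes on the diagonal, which is exactly where $m$ blows up. The cocycle identity for $b$ under $\pi$ is immediate from that of $c$. For $L^p$-integrability, the pointwise bound $|c(g)| \leq |g|$ gives $|b(g)| \leq 2|g|$, while a standard hyperbolicity estimate shows $b(g) \approx 0$ on pairs with $D(\xi,\eta) \lesssim e^{-|g|}$; combined with Ahlfors $\delta$-regularity of $\nu$, this yields $\|b(g)\|_p^p \lesssim |g|^p e^{\delta|g|}$, in particular $b(g) \in L^p$.

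For properness, the guiding heuristic is that $c(g)$ is bimodal at scale $|g|$: approximately $+|g|$ on the Sullivan shadow of $go$ from $o$ (of $\nu$-mass $\asymp e^{-\delta|g|}$) and approximately $-|g|$ outside. Thus $|b(g)(\xi,\eta)| \asymp 2|g|$ when exactly one of $\xi,\eta$ lies in the shadow, and a short computation using Ahlfors regularity shows that this symmetric-difference set carries $m$-mass of order $1$, giving $\|b(g)\|_p \gtrsim |g|$ and hence properness. The principal obstacle I anticipate is precisely this properness estimate, which requires a delicate balance between the geometry of shadows and the singularity of $m$ near the diagonal; the hypothesis that $p$ is large should enter in order to absorb the multiplicative constants coming from the quasi-conformality of $\nu$ and any residual approximation errors in $c$ or $m$ after passage through the $p$-th root. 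A secondary technical difficulty is producing $m$ and $c$ with exact, rather than quasi-, equivariance in the purely group-theoretic setting, where Patterson--Sullivan densities are only quasi-conformal and visual metrics are defined only up to bounded multiplicative error.
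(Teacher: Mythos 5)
Your overall architecture --- the measure $dm(\xi,\eta)=D(\xi,\eta)^{-2\delta}\,d\nu(\xi)\,d\nu(\eta)$ on the double boundary together with the antisymmetrized Busemann cocycle $b(g)(\xi,\eta)=c(g)(\xi)-c(g)(\eta)$ --- is exactly the paper's. But the step you defer to ``some refinement, for instance through Mineyev-style bicombings'' is the central technical input, not a side issue, and two of your three quantitative estimates do not go through as stated. On the first point: with the word metric, Patterson--Sullivan densities are only quasi-conformal and visual metrics only quasi-satisfy the conformality identity, so $\pi$ is not isometric and $b$ is not a cocycle; a multiplicative error in an exact algebraic identity cannot be absorbed by taking $p$ large. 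The paper's resolution is to replace the word metric by Mineyev's hat metric, for which the boundary action becomes genuinely M\"obius, i.e. $D(g\xi,g\eta)^2=|g'|(\xi)\,|g'|(\eta)\,D(\xi,\eta)^2$ holds \emph{exactly}; the invariance of $m$ and the cocycle identity then follow from an honest M\"obius calculus, with $\nu$ the Hausdorff measure of the resulting visual metric (Ahlfors regular by Blach\`ere--Ha\"issinsky--Mathieu). You name the right tool but leave the load-bearing part of the proof as a black box.

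The two estimates: (1) Integrability. Near the diagonal $b(g)$ is only \emph{small} (bounded by a constant depending on the hyperbolicity constant), not zero, and the near-diagonal region has infinite $m$-measure, so ``$b(g)\approx 0$ on $D(\xi,\eta)\lesssim e^{-|g|}$'' does not yield $b(g)\in L^p(m)$. What is needed is the Lipschitz-type bound $|b(g)(\xi,\eta)|\cleq_g D(\xi,\eta)$ together with the computation, via Ahlfors regularity, that $D\in L^p(m)$ precisely when $p$ exceeds the Hausdorff dimension of the boundary. This is where the hypothesis that $p$ be large enters --- not, as you suggest, in absorbing constants in the properness estimate. (2) Properness. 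Your mechanism is quantitatively impossible: a set of $m$-mass $\asymp 1$ on which $|b(g)|\asymp|g|$ would force $\|b(g)\|_p^p\cgeq|g|^p$, whereas already for free groups one computes $\|b(g)\|_p^p\asymp|g|$. The set $S\times S^c$ ($S$ the shadow of $g$) does carry $m$-mass of order $1$, but because of the weight $D(\xi,\eta)^{-2\delta}$ that mass concentrates on pairs with $\eta$ just outside $S$, where $\la g,\eta\ra$ is close to $|g|$ and $b(g)$ is bounded. The correct lower bound decomposes $\bd\G$ into $\asymp|g|$ annuli $A_k=\{\xi:\la g,\xi\ra\approx k\rho\}$ of mass $\asymp e^{-\delta k\rho}$ and observes that each scale contributes a bounded-below amount, giving $\|b(g)\|_p^p\cgeq|g|$; this is weaker than your claim but still suffices, since $|g|^{1/p}\to\infty$.
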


On the surface, Theorem~\ref{main} conveys the same idea as Theorem~\ref{yu's theorem}, namely that hyperbolic groups admit proper isometric actions on $L^p$-spaces. The two theorems are nevertheless different both in statement and in proof. Our approach has the following novel features.

\smallskip \emph{Framework.} In the first part of the paper, we put forth the following principle: a M\"obius action on an Ahlfors regular, compact metric space gives rise to an affine isometric action on an $L^p$-space for each $p$ greater than the Hausdorff dimension of the metric space. See Section \ref{M-actions}. 

\smallskip \emph{Measure.} For the action of a non-elementary hyperbolic group $\G$ on its boundary $\bd\G$, the M\"obius philosophy yields two ingredients. The first is an explicit $\G$-invariant measure on $\bd\G\times \bd\G$, twin to the Bowen--Margulis measure encountered in the CAT$(-1)$ setting. Our generalized Bowen--Margulis measure significantly improves a previous construction by Furman \cite[Prop.1]{Fu}. See Section \ref{the paragraph on measure}.

\smallskip \emph{Cocycle.} The second ingredient is a beautiful geometric cocycle for the action of $\G$ on $\bd\G\times \bd\G$. Its memorable form suggests that it could be interpreted as the \emph{other} Busemann cocycle. The properness of this cocycle depends on the hyperbolicity of $\G$. See Section \ref{the paragraph on cocycle}.

\smallskip \emph{Exponent.} The final conceptual advantage of the M\"obius philosophy is that it provides an integrability exponent $p$ which is related to a suitable interpretation of the Hausdorff dimension of the boundary $\bd\G$, the \emph{hyperbolic dimension} introduced by Mineyev \cite{M}. Alternatively, and somewhat less sharply, $p$ is related to a modified growth exponent of $\G$. See Section \ref{the paragraph on dimension}.

\smallskip \emph{$\ell^p$-cohomology.} Our construction of a proper isometric action of $\G$ on $L^p(\bd\G\times\bd\G)$ has the following $\ell^p$-cohomological interpretation:

\begin{thm}\label{lp consequence}
Let $\G$ be a non-elementary hyperbolic group. Then, for large enough $p$, the linear isometric action of $\G$ on the first $\ell^p$-cohomology group $H^1_{(p)}(\G)$ admits a proper cocycle.
\end{thm}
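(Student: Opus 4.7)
The plan is to deduce Theorem~\ref{lp consequence} directly from Theorem~\ref{main} by showing that the cocycle produced in the latter essentially factors through $H^1_{(p)}(\G)$. I would represent $H^1_{(p)}(\G)$ as the quotient $\mathcal{D}^p(\G)/(\ell^p(\G)+\R)$, where $\mathcal{D}^p(\G)$ is the space of functions on the Cayley graph of $\G$ whose gradient lies in $\ell^p$, and $\G$ acts by left translation (which descends to the quotient). For $p$ above the hyperbolic dimension, this quotient admits, via boundary traces of Dirichlet functions, a $\G$-equivariant identification with a Besov-type space $B^p(\bd\G)$, on which $\G$ acts through its M\"obius action on $\bd\G$.

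The crucial observation is that the Besov norm admits the representation
$$\|f\|^p \asymp \int_{\bd\G \times \bd\G} |f(\xi)-f(\eta)|^p \D\nu(\xi,\eta),$$
where $\nu$ is precisely the generalized Bowen--Margulis measure constructed in Section~\ref{the paragraph on measure}. Consequently, the two-point coboundary map $\bar\delta : B^p(\bd\G) \to L^p(\bd\G\times\bd\G, \nu)$, $f \mapsto \bigl((\xi,\eta)\mapsto f(\xi)-f(\eta)\bigr)$, is a $\G$-equivariant bi-Lipschitz embedding. The next step is to inspect the cocycle $c : \G \to L^p(\bd\G\times\bd\G)$ of Theorem~\ref{main} and verify that its Busemann-like form from Section~\ref{the paragraph on cocycle} places it in the image of $\bar\delta$. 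One expects $c(g)(\xi,\eta) = \psi_g(\xi) - \psi_g(\eta)$, where $\psi_g(\xi) := \beta_\xi(e,g)$ is the standard one-variable Busemann cocycle on $\bd\G$; note that $\psi_g$ satisfies the identity $\psi_{gh}(\xi) = \psi_g(\xi) + \psi_h(g^{-1}\xi)$, which is exactly the cocycle identity for the boundary action. Setting $b(g) := \psi_g \in B^p(\bd\G) \cong H^1_{(p)}(\G)$, one obtains a cocycle for the $\G$-action on $H^1_{(p)}(\G)$, and the bi-Lipschitz property of $\bar\delta$ transports properness of $c$ to $\|b(g)\|_{H^1_{(p)}} \asymp \|c(g)\|_{L^p} \to \infty$.

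The main obstacle, as I see it, lies in the first step: carefully pinning down the $\G$-equivariant isomorphism $H^1_{(p)}(\G) \cong B^p(\bd\G)$ with a measure and integrability threshold compatible with Theorem~\ref{main}. Bourdon--Pajot-style identifications were initially developed in the CAT$(-1)$ or Loewner-boundary setting; in the general hyperbolic case one has to work with a visual/hyperbolic metric on $\bd\G$ together with the hyperbolic-dimension formalism sketched in Section~\ref{the paragraph on dimension}, and verify that the critical $p$ produced there agrees with the $p$ for which Theorem~\ref{main} holds. Once this bookkeeping is in place and the explicit formula for $c$ is confirmed to land in the image of $\bar\delta$, the remainder of the deduction is essentially formal.
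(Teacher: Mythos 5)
Your proposal follows essentially the same route as the paper: identify $H^1_{(p)}(\G)$ with the boundary Besov space $B_p(\bd\G)$ via Bourdon--Pajot, observe that the Besov norm is exactly the $L^p(\nu)$-norm of the two-point coboundary (so the embedding into $L^p(\bd\G\times\bd\G,\nu)$ is in fact isometric, not merely bi-Lipschitz), note that the cocycle of Theorem~\ref{main} is the coboundary of the one-variable Busemann cocycle $g\mapsto[\la g,\cdot\ra]$, and transport properness back. The ``main obstacle'' you flag is handled in the paper simply by citing Bourdon--Pajot, whose equivariant isomorphism applies to general non-elementary hyperbolic groups once the boundary carries an Ahlfors regular visual metric, with the threshold $p>\hat{e}(\G)$ matching that of Theorem~\ref{main}.
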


Again, we provide an appealing cocycle for the action, see Section~\ref{lp-cohomology stuff}. Typical results on the first $\ell^p$-cohomology of finitely generated groups are concerned with the vanishing/non-vanishing dichotomy, and Theorem~\ref{lp consequence} is entirely new in that respect. Its proof uses a result of Bourdon and Pajot \cite{BP}.

Differences aside, the proofs of Theorem~\ref{main} and Theorem~\ref{yu's theorem} share a common technical point, and that is Mineyev's powerful re-metrization procedure. For our purposes, the relevant upshot of this procedure is that it leads to visual metrics on the boundary with much better properties than the visual metrics coming from the word metric. These new visual metrics were constructed by Mineyev in \cite{M}.

We would like to mention another recent result concerning proper isometric actions of hyperbolic groups on $L^p$-spaces. In \cite{B+}, Bourdon shows the following: for every non-elementary hyperbolic group $\G$, there is a positive integer $n$ such that the linear isometric action of $\G$ on $\ell^p(\sqcup_1^n\G)$ admits a proper cocycle for every $p>\mathrm{Confdim}\; \bd \G$. Here, too, the integrability exponent $p$ is related to a suitable interpretation of the Hausdorff dimension of the boundary: the Ahlfors regular, conformal dimension $\mathrm{Confdim}$, which is easily seen to be no larger than the hyperbolic dimension mentioned above. Thus, Bourdon's exponent bound is in principle better than ours, though discriminating examples are probably very hard to construct (if there are any at all). Note however that the linear part of the action requires several copies of the regular representation, and there seems to be no explicit formula for the number of copies one needs to consider. Bourdon's construction of proper isometric actions is achieved via $\ell^p$-cohomology; in particular, it has the advantage of dispensing with Mineyev's technical procedure.

\section{Preliminaries}
\subsection{Notation} We write $\cleq$ to mean inequality up to a positive multiplicative constant, and the corresponding equivalence is denoted $\asymp$. The constants involved in these relations often depend on some parameter, and we record this dependence as a subscript (e.g., $\cleq _{\e}$).

\subsection{Isometric actions on $L^p$-spaces}\label{general} 
By the Mazur--Ulam theorem (see \cite{Nic12} for a short proof) isometric group actions on real Banach spaces are affine. An isometric action of a discrete group $\G$ on a real $L^p$-space is obtained from two ingredients. The first ingredient is a measured space $X$ on which $\G$ acts in a measure-preserving way; then $(g,F)\mapsto g. F$ is a linear isometric action of $\G$ on $L^p(X)$. The second ingredient is a cocycle for this linear isometric action, that is a map $c:\G\to L^p(X)$ satisfying $c_{gh}=g.c_h+c_g$ for all $g,h\in \G$; then $(g,F)\mapsto g. F+c_g$ is an affine isometric action of $\G$ on $L^p(X)$. The latter isometric action is proper if $\|c_g\|_{L^p(X)}\to \infty$ as $g\to \infty$ in $\G$.

Throughout the paper, it is understood that $p\in [1,\infty)$.

\section{Proper isometric actions of free groups on $L^p$-spaces}\label{free group case}
As a warm-up, we start by giving a simple and self-contained proof of Theorem~\ref{main} in the case of free groups. The purpose of this discussion is to foreshadow two key points, developed later: the construction of affine isometric actions on $L^p$-spaces from M\"obius actions (Section~\ref{M-actions}), and the properness of the cocycle in the case of hyperbolic groups (Section~\ref{hyperbolic case}).

\subsection{The boundary $\Omega$} Let $\F$ be the free group on $n\geq 2$ generators, and put $q=2n-1$. The Cayley graph of $\F$ with respect to the standard generators is the $2n$-valent tree, rooted at the identity element. The boundary of this tree, customarily denoted by $\Omega$, is the set of all infinite rooted paths without backtracks. We endow $\Omega$ with a probability measure $\mu$ defined by the requirement that
 \[\mu(\Omega_x)=\tfrac{q}{q+1}\: q^{-|x|}\]
for all non-identity elements $x\in \F$. Here $\Omega_x$ denotes the ``boundary under $x$'', that is, the boundary subset consisting of all those $\omega\in \Omega$ which start with $x$, and $|x|$ is the length of $x$.

The Poisson kernel is given by
\[P_g(\omega)=q^{-|g|+2(g,\omega)}\qquad (g\in\F, \omega\in \Omega)\]
where $(\cdot, \cdot)$, the Gromov product based at the identity, measures the length of the longest shared path. The Poisson kernel $P_g$ represents the Radon - Nikodym derivative $\mathrm{d}(g_*\mu)/\mathrm{d}\mu$, and it satisfies the cocycle relation $P_{g h}=P_g\; g. P_{h}$ for $g,h\in \F$. (For more details see, for instance, Fig\`a-Talamanca - Picardello \cite{FTP}.)

\subsection{An invariant measure on $\Omega\times \Omega$} The following key relation is easy to check: 
\begin{align*}\tag{$\dagger$}q^{-2(g \xi, g\omega)}= P_{g^{-1}}(\xi)\:P_{g^{-1}}(\omega)\:q^{-2(\xi, \omega)}
\end{align*}
for all $g\in \F$ and $\xi, \omega\in \Omega$. The main consequence of this relation is that a suitably weighted product measure on $\Omega\times \Omega$ is $\F$-invariant. Namely, let $\nu$ be the measure on $\Omega\times \Omega$ given by
\[\D\nu = q^{2(\xi,\omega)}\D\mu(\xi)\D\mu(\omega).\]

The diagonal of $\Omega\times \Omega$ is $(\mu\times \mu)$-negligible, since points of $\Omega$ are $\mu$-negligible. Therefore $\nu$ is well-defined, and the diagonal of $\Omega\times \Omega$ is $\nu$-negligible as well. 

Note also that $\nu$ is infinite and $\sigma$-finite. Indeed, consider the countable partition of $\Omega \times \Omega- \mathrm{diag}$ given by the sets $K_n=\{(\xi,\omega )\in  \Omega\times \Omega: (\xi,\omega)=n\}$, where $n\geq 0$. We claim that each $K_n$ has finite $\nu$-measure, and $\sum \nu(K_n)=\infty$. When $n\geq 1$, we have $\mu(\{\xi\in  \Omega: (\xi,\omega)=n\})=\tfrac{q-1}{q+1}\: q^{-n}$ for each $\omega\in\Omega$. Then $(\mu\times\mu)(K_n)=\tfrac{q-1}{q+1}\: q^{-n}$, hence $\nu(K_n)=\tfrac{q-1}{q+1}\: q^{n}$. A similar argument shows that $\nu(K_0)=\tfrac{q}{q+1}$. 

\begin{lem}\label{invariance for free} The measure $\nu$ is invariant for the diagonal action of $\F$.
\end{lem}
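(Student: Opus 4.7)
The plan is a direct computation using relation $(\dagger)$ together with the Radon–Nikodym property of the Poisson kernel. It suffices to show that for every bounded Borel $F:\Omega\times\Omega\to\R$ and every $g\in\F$,
\[
\int F(g\xi, g\omega)\D\nu(\xi,\omega)=\int F(\xi,\omega)\D\nu(\xi,\omega).
\]

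The three ingredients I would assemble are: (i) relation $(\dagger)$ in the equivalent form $q^{2(\xi,\omega)}=P_{g^{-1}}(\xi)\,P_{g^{-1}}(\omega)\,q^{2(g\xi,g\omega)}$; (ii) the Radon–Nikodym identity $\int h(g\xi)\D\mu(\xi)=\int h(\xi)\,P_g(\xi)\D\mu(\xi)$; and (iii) the consequence $P_{g^{-1}}(\xi)=P_g(g\xi)^{-1}$ of the cocycle relation $P_{gh}=P_g\,g.P_h$ applied at $h=g^{-1}$ (using $P_e\equiv 1$).

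First I would expand the definition of $\nu$ and insert (i) to rewrite the left-hand side as
\[
\int F(g\xi,g\omega)\,P_{g^{-1}}(\xi)\,P_{g^{-1}}(\omega)\,q^{2(g\xi,g\omega)}\D\mu(\xi)\D\mu(\omega).
\]
Using (iii), the weight $P_{g^{-1}}(\xi)\,P_{g^{-1}}(\omega)$ equals $P_g(g\xi)^{-1}\,P_g(g\omega)^{-1}$, so the whole integrand is of the form $G(g\xi,g\omega)$ where
\[
G(\xi',\omega')=F(\xi',\omega')\,P_g(\xi')^{-1}\,P_g(\omega')^{-1}\,q^{2(\xi',\omega')}.
\]
Then I would apply (ii) twice — once in the $\xi$ variable and once in the $\omega$ variable — to replace $\int G(g\xi,g\omega)\D\mu(\xi)\D\mu(\omega)$ by $\int G(\xi',\omega')\,P_g(\xi')\,P_g(\omega')\D\mu(\xi')\D\mu(\omega')$. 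The two Poisson-kernel factors cancel the $P_g^{-1}$'s inside $G$, leaving exactly $\int F(\xi',\omega')\,q^{2(\xi',\omega')}\D\mu(\xi')\D\mu(\omega')=\int F\D\nu$, as required.

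There is no real obstacle here: the only thing to be careful about is the bookkeeping of which factor of $P_g$ or $P_{g^{-1}}$ is evaluated at which point, and to justify (ii) on the non-identity elements via monotone/dominated convergence (the measure $\nu$ is only $\sigma$-finite, so I would first prove the identity for $F$ supported on one of the sets $K_n$ of finite $\nu$-measure introduced just before the lemma, then extend). The content of the lemma is essentially the statement that the conformal distortion encoded by $q^{-2(\cdot,\cdot)}$ is exactly compensated by the Jacobians $P_g\,P_g$ of the diagonal $\F$-action, which is the whole point of $(\dagger)$ and foreshadows the general Möbius framework of Section~\ref{M-actions}.
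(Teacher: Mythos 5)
Your proposal is correct and is essentially the paper's own argument: the paper's proof is the same three-step computation (change of variables via the Radon--Nikodym property of $P_g$, then relation $(\dagger)$), merely read in the opposite order. Your extra care with $\sigma$-finiteness and the cocycle identity $P_{g^{-1}}(\xi)=P_g(g\xi)^{-1}$ is sound but not a different method.
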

\begin{proof}
 Let $F\in L^1(\Omega\times \Omega, \nu)$ and $g\in \F$. Then:
\begin{align*}
\int g . F \D \nu &=\iint F(g^{-1}\xi,g^{-1} \omega)\: q^{2(\xi,\omega)}\D\mu(\xi)\D\mu(\omega)\\
&= \iint F(\xi, \omega)\: q^{2(g\xi,g\omega)}\D g^*\mu(\xi)\D g^*\mu(\omega)\\
&= \iint F(\xi, \omega)\: q^{2(g\xi,g\omega)} P_{g^{-1}}(\xi)P_{g^{-1}}(\omega) \D\mu(\xi)\D\mu(\omega)\\
&=\iint F(\xi, \omega)\: q^{2(\xi,\omega)} \D\mu(\xi)\D\mu(\omega)= \int F \D \nu
\end{align*}
The above computation involves a change of variables, followed by an application of $(\dagger)$.
\end{proof}

\subsection{A cocycle for the action on $\Omega\times \Omega$} A natural cocycle for the action of $\F$ on $\Omega$ is the logarithm of the Poisson kernel $g\mapsto \log P_{g}$, and a cocycle in two variables can be obtained by taking the difference of two such cocycles. More precisely, if
\[c_g(\xi,\omega):=\tfrac{1}{2}\big(\log P_{g}(\xi)- \log P_{g}(\omega) \big)=(g,\xi)-(g,\omega)\]
then $g\mapsto c_g$ a cocycle for the diagonal action of $\F$ on $\Omega\times \Omega$. The next proposition shows that, for each $p$, $c$ is a proper cocycle for the linear isometric representation of $\F$ on $L^p(\Omega\times \Omega, \nu)$.

\begin{prop}\label{properness for free} We have $\|c_g\|_{L^p(\nu)}\asymp_p |g|^{1/p}$.
\end{prop}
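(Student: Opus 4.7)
The plan is to evaluate $\|c_g\|_{L^p(\nu)}^p$ by partitioning $\Omega\times\Omega$ according to the values of $(g,\xi)$ and $(g,\omega)$. The key geometric input is a tree-theoretic observation about Gromov products: for $g\in\F$ and $\xi,\omega\in\Omega$, if $(g,\xi)\neq (g,\omega)$ then $(\xi,\omega)=\min\{(g,\xi),(g,\omega)\}$, whereas if $(g,\xi)=(g,\omega)$ then $c_g(\xi,\omega)=0$. This is the tripod identity applied to the points $e,g,\xi,\omega$ in the Cayley tree. Consequently the integrand in $\|c_g\|_{L^p(\nu)}^p$ vanishes on $\{(g,\xi)=(g,\omega)\}$, and by symmetry it suffices to work on the region where $(g,\xi)>(g,\omega)$ and multiply by $2$.

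First I would compute the slice measures. For $0\leq j\leq |g|$, the set $A_j=\{\xi\in\Omega:(g,\xi)=j\}$ consists of rays whose longest shared path with the geodesic to $g$ has length exactly $j$. Using the defining formula $\mu(\Omega_x)=\tfrac{q}{q+1}q^{-|x|}$, one checks that $\mu(A_j)\asymp q^{-j}$ uniformly in $j$, with the edge adjustments at $j=0$ and $j=|g|$ absorbed into the multiplicative constants. The same estimate holds for $B_k=\{\omega\in\Omega:(g,\omega)=k\}$.

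Next I would restrict to a slice $A_j\times B_k$ with $0\leq k<j\leq |g|$. By the tripod observation, $(\xi,\omega)=k$ everywhere on this slice, so the integrand takes the constant value $(j-k)^p q^{2k}$, and summation yields
\begin{align*}
\|c_g\|_{L^p(\nu)}^p \;\asymp\; \sum_{0\leq k<j\leq |g|} (j-k)^p\, q^{2k}\, q^{-j}\, q^{-k} \;=\; \sum_{0\leq k<j\leq |g|} (j-k)^p\, q^{k-j}.
\end{align*}
Setting $m=j-k$, the right-hand side becomes $\sum_{m=1}^{|g|} (|g|-m+1)\, m^p\, q^{-m}$. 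Since $\sum_{m\geq 1} m^p q^{-m}$ converges (giving the upper bound $\cleq_p |g|$) and the $m=1$ term alone contributes $|g|/q$ (giving the matching lower bound), this sum is $\asymp_p |g|$, and extracting the $p$-th root gives the claimed $\asymp_p |g|^{1/p}$.

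The main step requiring care is the tree-geometric bookkeeping: verifying the tripod identity relating $(g,\xi)$, $(g,\omega)$, and $(\xi,\omega)$, and checking that the edge cases $k=0$ and $j=|g|$ do not distort the slice measures beyond a constant factor. Once that is in place, the computation reduces to a straightforward geometric-series estimate, which is why the Poisson-kernel weight $q^{2(\xi,\omega)}$ built into $\nu$ conspires so cleanly with the slice weights $q^{-j}q^{-k}$ to produce the linear growth in $|g|$.
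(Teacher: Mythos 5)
Your argument is correct and follows essentially the same route as the paper: decompose $\Omega\times\Omega$ by the values of $(g,\xi)$ and $(g,\omega)$, use the tree identity $(\xi,\omega)=\min\{(g,\xi),(g,\omega)\}$ off the diagonal of values together with the slice estimate $\mu(\{(g,\xi)=i\})\asymp q^{-i}$, and reduce to the sum $\sum|i-j|^p q^{-|i-j|}$. The only (immaterial) difference is at the last step, where you estimate that sum by counting pairs with $j-k=m$ directly, while the paper runs a recurrence in $N=|g|$; both give $\asymp_p|g|$.
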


\begin{proof} Let $g\in \F$ and write
\[\|c_g\|^p_{L^p(\nu)}=\iint \big |(g,\xi)-(g,\omega)\big|^p\: q^{2(\xi,\omega)}\D\mu(\xi)\D\mu(\omega).\]
As $\xi$ runs over $\Omega$, the Gromov product $(g, \xi)$ takes on the values $0,1,\dots, |g|$. Thus
\begin{align*}
\|c_g\|^p_{L^p(\nu)}&=\sum _{i,j=0}^{|g|} \iint_{\begin{smallmatrix} (g,\xi)=i \\ (g,\omega)=j \end{smallmatrix}} \big |(g,\xi)-(g,\omega)\big|^p\: q^{2(\xi,\omega)}\D\mu(\xi)\D\mu(\omega)\\
&=\sum _{i,j=0}^{|g|}  |i-j|^p\: q^{2\min\{i,j\}}  \mu\big( \{\xi: (g,\xi)=i\}\big)\: \mu\big(\{\omega:  (g,\omega)=j\}\big),
\end{align*}
using the fact that $(\xi,\omega)=\min\{(g,\xi),(g,\omega)\}$ whenever $(g,\xi)\neq (g,\omega)$. For $i\in \{0,1,\dots, |g|\}$, we have $\mu\big(\{\xi: (g, \xi)=i\}\big )\asymp q^{-i}$. In fact,
\begin{align*}
\tfrac{q-1}{q+1}q^{-i}\leq \mu\big(\{\xi: (g, \xi)=i\}\big)\leq \tfrac{q}{q+1}q^{-i}
\end{align*}
with equality on the left for all $i\neq 0,|g|$, and equality on the right at the endpoints $0,|g|$. Hence, letting
\begin{align*}
S_N:&=\sum _{i,j=0}^{N}  |i-j|^p\: q^{2\min\{i,j\}} q^{-i} q^{-j}= \sum _{i,j=0}^{N}  |i-j|^p\: q^{-|i-j|}
\end{align*}
we have $\|c_g\|^p_{L^p(\nu)}\asymp S_{|g|}$. Now $S_N\asymp_p N$: the recurrence
\begin{align*}
S_{N+1}& =\sum _{i,j=0}^{N+1}  |i-j|^p\: q^{-|i-j|}=S_N+2 \sum _{i=0}^{N}  (N+1-i)^p\: q^{-(N+1-i)}=S_N+2 \sum _{i=1}^{N+1}  i^p\: q^{-i}
\end{align*}
gives $S_N+2q^{-1}\leq S_{N+1}<S_N+2\big(\sum _{i\geq 1}  i^p\: q^{-i}\big)$. We conclude that $\|c_g\|^p_{L^p(\nu)}\asymp_p |g|$.
\end{proof}

There are, certainly, easier ways to produce proper isometric actions of free groups on $L^p$-spaces. Here is one such action. Let $\vec{X}$ be the directed Cayley graph of $\F$ with respect to the standard generators. Let $\{g\to h\}$ denote the shortest oriented path in $\vec{X}$ from $g\in \F$ to $h\in \F$. Then $g\mapsto c_g:=\big(\sum_{e\in \{1\to g\}} e-\sum_{e\in \{g\to 1\}} e\big)$
defines a proper cocycle for the linear isometric action of $\F$ on the $\ell^p$-space of the edge-set of $\vec{X}$.

Our aim in what follows is to promote the boundary-based method presented in this section to general non-elementary hyperbolic groups.

\section{Interlude: M\"obius calculus}\label{M-calculus}
In this section we discuss ``derivatives'' of M\"obius maps. The facts established herein will be used in the next section to construct affine isometric actions of M\"obius groups on $L^p$-spaces.

Throughout, we let $(X,d)$ be a compact metric space without isolated points and we consider M\"obius self-homeomorphisms of $X$. Recall that the \emph{cross-ratio} of a quadruple of distinct points in $X$ is defined by the formula
\[(z_1,z_2;z_3,z_4)=\frac{d(z_1,z_3)\: d(z_2,z_4)}{d(z_1,z_4)\: d(z_2,z_3)}.\]
A homeomorphism $g: X\to X$ is called a \emph{M\"obius homeomorphism} if $g$ preserves the cross-ratios, i.e., $(g z_1,g z_2;gz_3,g z_4)=(z_1,z_2;z_3,z_4)$ for all quadruples of distinct points $z_1,z_2,z_3,z_4\in X$.

\begin{lem}\label{alternate} Let $g$ be a self-homeomorphism of $X$. Then $g$ is M\"obius if and only if there exists a positive continuous function on $X$, denoted $|g'|$, with the property that for all $x, y\in X$ we have
\begin{align*}\tag{$*$}
d^{\:2}(g x,g y)=|g'|(x)\:|g'|(y)\:d^{\:2}(x,y).
\end{align*}
\end{lem}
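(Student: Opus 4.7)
The \emph{if} direction is an immediate substitution: writing $(*)$ as $d(gx,gy) = \sqrt{|g'|(x)\,|g'|(y)}\,d(x,y)$ and plugging into the cross-ratio $(gz_1,gz_2;gz_3,gz_4)$, the factor $\sqrt{|g'|(z_i)}$ at each index $i$ appears once in the numerator and once in the denominator, so it cancels and cross-ratios are preserved. The bulk of the work is in the \emph{only if} direction.

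\textbf{Construction on a punctured space.} I would introduce the auxiliary function $\rho(x,y) := d(gx,gy)/d(x,y)$ for $x \neq y$. The M\"obius condition rewrites as the multiplicative identity $\rho(z_1,z_3)\,\rho(z_2,z_4) = \rho(z_1,z_4)\,\rho(z_2,z_3)$ on quadruples of distinct points. Fix distinct reference points $a,b \in X$, available since $X$ has no isolated points. Applying the identity with $\{z_2,z_4\}=\{a,b\}$ in both orders and multiplying the two resulting equations, I obtain
\[
\rho(z,w)^2\,\rho(a,b)^2 \;=\; \rho(z,a)\,\rho(z,b)\,\rho(w,a)\,\rho(w,b)
\]
for distinct $z,w \in X\setminus\{a,b\}$. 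Setting
\[
|g'|_{a,b}(z) \;:=\; \frac{\rho(z,a)\,\rho(z,b)}{\rho(a,b)} \;=\; \frac{d(a,b)\,d(ga,gz)\,d(gb,gz)}{d(ga,gb)\,d(a,z)\,d(b,z)}
\]
gives a positive continuous function on $X\setminus\{a,b\}$ for which the displayed identity is precisely $(*)$ on its domain.

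\textbf{Gluing and main obstacle.} The explicit formula for $|g'|_{a,b}$ degenerates at the reference points, and this is the main obstacle: extending $|g'|$ continuously across $a$ and $b$. The plan is to vary the reference pair. For another choice $(a',b')$, the two factorizations give
\[
|g'|_{a,b}(z)\,|g'|_{a,b}(w) \;=\; \rho(z,w)^2 \;=\; |g'|_{a',b'}(z)\,|g'|_{a',b'}(w)
\]
on the common domain. A short algebraic argument then forces the ratio $|g'|_{a,b}/|g'|_{a',b'}$ to be a positive constant $c$; comparing at any two distinct points yields $c^2 = 1$, whence $c=1$ by positivity. The local pieces therefore glue to a single positive continuous function $|g'|$ on all of $X$, and $(*)$ extends to every pair (the diagonal case $x=y$ being trivial, with both sides vanishing).
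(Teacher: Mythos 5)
Your proof is correct and follows essentially the same route as the paper: your local formula $|g'|_{a,b}(z)=\rho(z,a)\,\rho(z,b)/\rho(a,b)$ is exactly the paper's $|g'|_{u,v}$, and both arguments glue these local pieces by varying the reference pair. The only (harmless) difference is that the paper identifies $|g'|(x)$ with the limit $\lim_{y\to x} d(gx,gy)/d(x,y)$, which makes independence of the reference pair immediate, whereas you establish that independence purely algebraically from the identity $|g'|_{a,b}(z)\,|g'|_{a,b}(w)=\rho(z,w)^{2}$.
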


Before we prove the lemma, let us observe that a continuous function $|g'|$ satisfying $(*)$ has, in particular, the property that
\begin{align*}
\lim_{y\to x} \frac{d(gx,gy)}{d(x,y)}=|g'|(x)
\end{align*}
for all $x\in X$. This property justifies the notation, as well as the interpretation of $|g'|$ as the \emph{metric derivative} of $g$. Following Sullivan \cite[Sec.4]{S}, we also interpret the relation $(*)$ as a geometric mean-value property.

\begin{proof}[Proof of Lemma~\ref{alternate}] 
We prove the forward implication. The converse is a trivial verification.

Assume that $g$ is a M\"obius homeomorphism. Let $x,u,v$ be a triple of distinct points in $X$. For any fourth distinct point $y$ we have
\[\frac{d(gx,gy)}{d(x,y)}\: \frac{d(gu,gv)}{d(u,v)}=\frac{d(gx,gu)}{d(x,u)}\: \frac{d(gy,gv)}{d(y,v)}\]
since $g$ preserves the cross-ratios. When $y\to x$, we obtain
\begin{align}\label{complicated derivative}
\lim_{y\to x} \frac{d(gx,gy)}{d(x,y)}=\frac{d(gx,gu)}{d(x,u)}\: \frac{d(gx,gv)}{d(x,v)}\: \frac{d(u,v)}{d(gu,gv)}.
\end{align}
Let $|g'|_{u,v}$ denote the expression on the right-hand side of \eqref{complicated derivative}, viewed as a function of $x$. Then $|g'|_{u,v}$ is a positive continuous function on $X- \{u,v\}$. However, the left-hand side of \eqref{complicated derivative} is independent of the choice of $u,v$. Thus, picking $\tilde u,\tilde v$ distinct points in $X- \{u,v\}$, we have that $|g'|_{u,v}=|g'|_{\tilde u,\tilde v}$ on $X- \{u,\tilde u,v,\tilde v\}$. Defining $|g'|$ on $X$ as $|g'|_{u,v}$ on $X- \{u,v\}$, and $|g'|_{\tilde u,\tilde v}$ on $X- \{\tilde u,\tilde v\}$, we obtain a positive continuous function. 

Now let us prove $(*)$ for distinct $x,y\in X$. Let $u,v$ be distinct points in $X- \{x,y\}$, so that we can use the local formula $|g'|_{u,v}$ for $|g'|$. The equality $d^{\:2}(gx,gy)=|g'|_{u,v}(x)\:|g'|_{u,v}(y)\: d^{\:2}(x,y)$ can be readily checked by rearranging factors and using the $g$-invariance of the cross-ratios. 
\end{proof}

The next lemma shows that metric derivatives are more than just continuous. 

\begin{lem}\label{Lipschitz}
Let $g$ be a M\"obius self-homeomorphism of $X$. Then $|g'|$ is Lipschitz.
\end{lem}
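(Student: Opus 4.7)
The plan is to rearrange the mean-value identity $(*)$ from Lemma~\ref{alternate} to solve for $|g'|(x)$ in terms of a reference point $r$, and then to compare the expressions at $x$ and at $y$ while keeping $r$ uniformly far from both. As a preliminary, $|g'|$ is positive and continuous on the compact space $X$, hence bounded: there is $M \geq 1$ with $M^{-1} \leq |g'| \leq M$. Substituting into $(*)$ yields that $g$ itself is $M$-Lipschitz, i.e.\ $d(gx, gy) \leq M \, d(x, y)$, a fact I will use repeatedly. Since $X$ has no isolated points it contains two distinct points; pick $u, v \in X$ and set $\delta := d(u, v) > 0$. The triangle inequality forces every $x \in X$ to satisfy $d(x, r_x) \geq \delta/2$ for some choice $r_x \in \{u, v\}$.

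For $x, y \in X$ with $d(x, y) \leq \delta/4$, take $r = r_x$, so that $d(y, r) \geq \delta/4$ as well. Evaluating $(*)$ at $(x, r)$ and at $(y, r)$ and subtracting produces
\[
|g'|(x) - |g'|(y) = \frac{1}{|g'|(r)} \cdot \frac{d^2(gx, gr) \, d^2(y, r) - d^2(gy, gr) \, d^2(x, r)}{d^2(x, r) \, d^2(y, r)},
\]
whose denominator is uniformly bounded below in terms of $\delta$ and $M$. To control the numerator, I factor it as $(AB - CD)(AB + CD)$ with $A = d(gx, gr)$, $B = d(y, r)$, $C = d(gy, gr)$, $D = d(x, r)$: the sum $AB + CD$ is bounded by $2M \diam(X)^2$, while the difference rearranges as $A(B - D) + D(A - C)$, with $|B - D| \leq d(x, y)$ by the triangle inequality and $|A - C| \leq d(gx, gy) \leq M \, d(x, y)$ by the triangle inequality combined with the $M$-Lipschitz estimate on $g$. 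Putting this together, $\bigl| |g'|(x) - |g'|(y) \bigr| \leq L \, d(x, y)$ for $d(x, y) \leq \delta/4$; in the complementary range $d(x, y) > \delta/4$ the crude bound $\bigl| |g'|(x) - |g'|(y) \bigr| \leq 2M \leq (8M/\delta) \, d(x, y)$ takes over. Together these give global Lipschitzness.

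The main technical point I anticipate is the reference-point gymnastics. A single globally fixed $u$ will not do, because the formula $|g'|(x) = d^2(gx, gu)/(|g'|(u) \, d^2(x, u))$ degenerates as $x \to u$; the denominator must therefore be kept uniformly away from zero, which forces $r$ to depend on $x$. The device of carrying two candidates $u, v$ and always selecting the one far from $x$ is precisely what makes the estimate uniform, and the absence of isolated points in $X$ is exactly what guarantees such a pair exists.
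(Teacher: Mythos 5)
Your proof is correct and follows essentially the same route as the paper's: both rearrange the mean-value identity $(*)$ to express $|g'|$ at a point via a reference point kept uniformly far away, and then estimate the resulting difference of ratios using the triangle inequality together with the Lipschitz bound $d(gx,gy)\leq (\max |g'|)\, d(x,y)$. The paper streamlines the computation by working with $\sqrt{|g'|}$ (so no squared quantities need factoring) and by choosing the reference point far from $x$ only, which removes the need for your case split on $d(x,y)$ versus $\delta/4$; these differences are cosmetic.
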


\begin{proof} We show that $\sqrt{|g'|}$ is Lipschitz. This is equivalent to $|g'|$ being Lipschitz, since 
\[2\: \sqrt{\min |g'|}\:\Big|\sqrt{|g'|}(x)-\sqrt{|g'|}(y)\Big|\leq \big||g'|(x)-|g'|(y)\big|\leq 2\: \sqrt{\max |g'|}\:\Big|\sqrt{|g'|}(x)-\sqrt{|g'|}(y)\Big|.\]

Let $x,y \in X$. There exists $z\in X-\{y\}$ such that $d(x,z)\geq (\diam X)/2$. By the geometric mean-value property $(*)$, we have 
\[\sqrt{|g'|}(x)-\sqrt{|g'|}(y)=\frac{1}{\sqrt{|g'|}(z)}\bigg(\frac{d(gx,gz)}{d(x,z)}-\frac{d(gy,gz)}{d(y,z)}\bigg).\]
Using the fact that $g$ is $(\max |g'|)$-Lipschitz, we estimate 
\begin{align*}
\frac{d(gx,gz)}{d(x,z)}-\frac{d(gy,gz)}{d(y,z)}& \leq \frac{d(gx,gy)}{d(x,z)}+\frac{d(gy,gz)}{d(x,z)}-\frac{d(gy,gz)}{d(y,z)}\\
& \leq \frac{d(gx,gy)}{d(x,z)}+\frac{d(gy,gz)}{d(y,z)}\:\frac{d(x,y)}{d(x,z)}\\
& \leq 2\: (\max |g'|)\:\frac{d(x,y)}{d(x,z)} \leq \frac{4\: (\max |g'|)}{\diam X}\: d(x,y).
\end{align*}
Thus
\begin{align}\label{lipschitz estimate}
\sqrt{|g'|}(x)-\sqrt{|g'|}(y)\leq \frac{4}{\diam X}\:\frac{\max |g'|}{\sqrt{ \min |g'|}}\:d(x,y)
\end{align}
so $\sqrt{|g'|}$ is, indeed, Lipschitz.
\end{proof}

We now take a measure-theoretic turn. Recall that the Hausdorff measure of dimension $D\geq 0$ on $X$ is defined by the formula
\[\mu_D(A)=\lim _{\delta\to 0} \Big(\inf\Big\{\sum (\diam U_i)^D\: : \: (U_i)\;  \delta\textrm{-cover of } A\Big\}\Big)\qquad (A\subseteq X).\]
This is a Borel measure which is interesting for a single $D$ only, the Hausdorff dimension of $X$.

The following lemma is a variation on a basic observation of Sullivan (cf. \cite[p.174]{S}).

\begin{lem}\label{Radon Nikodym}
Assume that $X$ has finite, non-zero Hausdorff dimension $D$, and let $g$ be a M\"obius self-homeomorphism of $X$. Then $|g'|^D$ represents the Radon - Nikodym derivative $\mathrm{d} g^*\mu_D/\mathrm{d} \mu_D$.
\end{lem}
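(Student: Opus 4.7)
The plan is to verify the integral identity $\mu_D(gA) = \int_A |g'|^D \D\mu_D$ for every Borel set $A\subseteq X$, which is precisely the assertion that $|g'|^D$ serves as the Radon--Nikodym derivative $\mathrm{d}g^*\mu_D/\mathrm{d}\mu_D$. As a preliminary step, the geometric mean-value property $(*)$ combined with the compactness of $X$ shows that $|g'|$ attains a positive minimum $m$ and a finite maximum $M$; hence $g$ is a bi-Lipschitz homeomorphism with $m\:d(x,y)\leq d(gx,gy)\leq M\:d(x,y)$, so $g^*\mu_D\ll\mu_D$ and the Radon--Nikodym derivative does exist --- the task is merely to identify it.

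The key input I would rely on is the standard transformation law of Hausdorff measure under bi-Lipschitz maps: if $f:A\to X$ satisfies $L_-\:d(x,y)\leq d(fx,fy)\leq L_+\:d(x,y)$ on $A$, then $L_-^D\:\mu_D(A)\leq \mu_D(fA)\leq L_+^D\:\mu_D(A)$. This follows at once from the definition of $\mu_D$ by transforming $\delta$-covers through $f$ and $f^{-1}$, and it is the quantitative bridge between metric scaling and Hausdorff measure.

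The remainder is a localization argument. Given $\e>0$, I would set $\lambda_k=m(1+\e)^k$ and partition $A$ into the Borel pieces $A_k=A\cap |g'|^{-1}\big([\lambda_k,\lambda_k(1+\e))\big)$, for $k=0,1,\dots,N$ with $N$ the smallest integer satisfying $\lambda_N(1+\e)>M$. On each $A_k$, the exact formula $(*)$ upgrades to $\lambda_k\:d(x,y)\leq d(gx,gy)\leq \lambda_k(1+\e)\:d(x,y)$, so the transformation law yields
\[\lambda_k^D\:\mu_D(A_k)\:\leq\: \mu_D(gA_k)\:\leq\: \lambda_k^D(1+\e)^D\:\mu_D(A_k);\]
the same chain of inequalities bounds $\int_{A_k}|g'|^D\D\mu_D$, since $|g'|\in [\lambda_k,\lambda_k(1+\e))$ on $A_k$. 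Summing over $k$ shows that both $\mu_D(gA)$ and $\int_A|g'|^D\D\mu_D$ lie in the interval $\big[S,\:(1+\e)^D\:S\big]$, where $S=\sum_k\lambda_k^D\:\mu_D(A_k)$; they therefore agree up to a factor $(1+\e)^D$, and letting $\e\to 0$ concludes the proof.

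The only genuine point of difficulty is securing uniform control on the local distortion of $g$ across each piece of the partition; this is exactly what the pointwise, non-infinitesimal form of $(*)$ delivers --- indeed, the stronger Lemma~\ref{Lipschitz} is not needed here, as mere continuity of $|g'|$ already suffices to set up the partition.
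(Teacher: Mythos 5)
Your proof is correct, and it reaches the identity $\mu_D(gA)=\int_A|g'|^D\D\mu_D$ by the same basic mechanism as the paper: the exact mean-value property $(*)$ gives two-sided Lipschitz control of $g$ on any piece where $|g'|$ oscillates by at most a factor $(1+\e)$, the transformation law for Hausdorff measure converts this into a two-sided bound on $\mu_D(g\,\cdot\,)$, and summing over a partition and letting $\e\to0$ finishes. The one genuine difference is the choice of partition. The paper cuts $S$ into finitely many pieces of small diameter (obtained from a finite open cover by $\eta$-sets) and invokes the uniform continuity of $|g'|$ on the compact space $X$ to force $\sup_T|g'|\leq(1+\e)\inf_T|g'|$ on each piece; you instead cut $A$ along the level sets $A_k=A\cap|g'|^{-1}\big([\lambda_k,\lambda_k(1+\e))\big)$, where the oscillation bound holds by construction. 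Your version is slightly cleaner in that it needs no modulus of continuity for $|g'|$ --- only its measurability and the positive bounds $0<m\leq|g'|\leq M<\infty$ (which do come from continuity and compactness) --- and it makes explicit that the two quantities being compared both sit in the same interval $[S,(1+\e)^DS]$. The point that makes the level-set partition legitimate, and which you correctly exploit, is that $(*)$ is a global rather than merely infinitesimal statement: for $x,y\in A_k$ the distortion factor $\sqrt{|g'|(x)\,|g'|(y)}$ is a geometric mean of two values in $[\lambda_k,\lambda_k(1+\e))$ and hence lies in that interval, no matter how spread out $A_k$ is. Both arguments handle the case $\mu_D(gA)=\infty$ automatically, since the sandwich then forces both sides to be infinite.
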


\begin{proof} Fix a measurable subset $S\subseteq X$. We want to show that 
\begin{align}\label{desire}
\mu_D(gS)=\int_S |g'|^D \D\mu_D.
\end{align}
From the geometric mean-value property $(*)$, we get that 
\[\big (\inf_U |g'|\big)\:\diam U\leq \diam gU\leq \big (\sup_U |g'|\big)\:\diam U\] 
for every $U\subseteq X$. It follows that, for every measurable $T\subseteq X$, we have
\begin{align}\label{Lipschitz type}
\big(\inf_T |g'|\big)^D\: \mu_D(T)\leq \mu_D(gT)\leq \big(\sup_T |g'|\big)^D\:\mu_D(T).
\end{align}
Now let $\e>0$. Let also $\eta>0$ be such that $\big||g'|(x)-|g'|(y)\big|<\e \min |g'|$ whenever $d(x,y)<\eta$. Hence, if $T$ is a measurable $\eta$-set, in the sense that $\diam T<\eta$, then $\sup_T |g'|\leq (1+\e) \inf_T |g'| $, which in turn yields
\[(1+\e)^{-D}\:\big(\sup_T |g'|\big)^D\:\mu_D(T)\leq \int_T |g'|^D \D\mu_D\leq (1+\e)^D\:\big(\inf_T |g'|\big)^D\:\mu_D(T).\]
Thus, in light of \eqref{Lipschitz type}, we get that for every measurable $\eta$-set $T$ the following holds:
\begin{align}\label{small diameter}
(1+\e)^{-D}\: \mu_D(gT)\leq \int_T |g'|^D \D\mu_D\leq (1+\e)^D\: \mu_D(gT)
\end{align}
The measurable set $S$ we started with may be partitioned into a finite number of measurable $\eta$-subsets. (Indeed, pick a finite cover of $X$ by open $\eta$-subsets. This cover gives rise to a finite partition of $X$ into Borel $\eta$-subsets, which partition can be used on any measurable subset of $X$.) Applying \eqref{small diameter} to these $\eta$-pieces of $S$, and then adding up, we get
\begin{align*}
(1+\e)^{-D}\: \mu_D(gS)\leq \int_S |g'|^D \D\mu_D\leq (1+\e)^D\: \mu_D(gS).
\end{align*}
Since $\e$ is arbitrary, we conclude that \eqref{desire} holds.
\end{proof}

\section{Affine isometric actions of M\"obius groups on $L^p$-spaces}\label{M-actions}
As before, $(X,d)$ is a compact metric space without isolated points. Throughout this section we also assume that the Hausdorff dimension of $X$, denoted by $D$, is finite and non-zero. The goal is to construct an affine isometric action of $\Mob(X)$, the group of M\"obius self-homeomorphisms of $X$, on $L^p(X\times X)$. 

\subsection{A M\"obius-invariant measure on $X\times X$} The relevant measure on $X\times X$ is given by a suitable weighting of the product measure $\mu_D\times \mu_D$, where $\mu_D$ is the $D$-dimensional Hausdorff measure on $X$.

\begin{lem}\label{nature of nu} Assume $\mu_D(X)<\infty$. Then
\[\D \nu(x,y)=d^{\: -2D}(x,y) \D\mu_D (x)\D\mu_D (y)\]
defines a $\sigma$-finite Borel measure on $X\times X$. The diagonal of $X\times X$ is $\nu$-negligible, and on the locally compact and $\sigma$-compact space $X\times X- \mathrm{diag}$, the measure $\nu$ is a Radon measure.
\end{lem}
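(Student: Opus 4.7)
The plan is to carry out four routine but careful steps: (i) verify that $\mu_D$ has no atoms, so that the diagonal is $(\mu_D\times\mu_D)$-negligible; (ii) deduce that the weighted product measure is well-defined as a Borel measure on $X\times X$; (iii) establish $\sigma$-finiteness by decomposing along annuli around the diagonal; (iv) deduce the Radon property on the open set $X\times X\setminus\mathrm{diag}$ from local finiteness.

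For step (i), I observe that since $D>0$, the one-element cover $\{\{x\}\}$ witnesses $\mu_D(\{x\})\leq 0^D=0$, so $\mu_D$ has no atoms. Fubini applied to the indicator of the (closed) diagonal $\Delta\subseteq X\times X$ then gives
\[
(\mu_D\times\mu_D)(\Delta)=\int_X\mu_D(\{x\})\D\mu_D(x)=0.
\]
For step (ii), off the diagonal the function $(x,y)\mapsto d^{\:-2D}(x,y)$ is continuous and finite, hence Borel measurable, so the prescription
\[
\nu(A)=\int_{A\setminus\Delta} d^{\:-2D}(x,y)\D\mu_D(x)\D\mu_D(y)
\]
defines a Borel measure on $X\times X$, consistent with $\nu(\Delta)=0$.

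For step (iii), I partition $X\times X\setminus\Delta$ into countably many annuli $A_n=\{(x,y):2^{-n-1}<d(x,y)\leq 2^{-n}\}$ for $n\in\Z$ (only finitely many indices are non-empty on the upper side since $X$ is bounded). On each $A_n$ the integrand is bounded above by $2^{2D(n+1)}$, while $(\mu_D\times\mu_D)(A_n)\leq \mu_D(X)^2<\infty$; thus each $\nu(A_n)$ is finite, giving $\sigma$-finiteness. The same estimate shows that any compact $K\subseteq X\times X\setminus\Delta$, being separated from $\Delta$ by a positive distance $\delta_K=\min_{(x,y)\in K}d(x,y)>0$, satisfies $\nu(K)\leq \delta_K^{-2D}\mu_D(X)^2<\infty$.

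For step (iv), the complement $X\times X\setminus\Delta$ is open in the compact Hausdorff space $X\times X$, hence locally compact; writing it as the union of the closed (compact) annuli $\overline{A_n}$ exhibits its $\sigma$-compactness. Local finiteness of $\nu$ on this space was verified in step (iii). A locally finite Borel measure on a locally compact, $\sigma$-compact, metrizable space is automatically Radon (inner regular on open sets, outer regular, finite on compacts), which completes the argument. There is no real obstacle here beyond bookkeeping; the only subtlety is ensuring that $D>0$ is used precisely at the atomlessness step and that the integrand's blow-up at $\Delta$ is confined to a null set before one starts comparing $\nu$ to the product measure.
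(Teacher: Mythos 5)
Your argument is correct and follows essentially the same route as the paper: negligibility of points for a positive-dimensional Hausdorff measure gives a null diagonal, a decomposition of the slashed square into sets bounded away from the diagonal gives $\sigma$-finiteness and $\sigma$-compactness, and the Radon property follows from the standard fact that a Borel measure finite on compacts is Radon on a locally compact, $\sigma$-compact metric space (the paper cites Rudin, Thm.~2.18, for this). The only cosmetic difference is your use of dyadic annuli where the paper uses the sets $\{(x,y): d(x,y)\geq 1/n\}$.
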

\begin{proof}
By the separability of $X$, the product measure $\mu_D\times \mu_D$ is a Borel measure on $X\times X$. Points are negligible for a Hausdorff measure of positive dimension, so the diagonal of $X\times X$ is ($\mu_D\times \mu_D$)-negligible. Thus $\nu$ is well-defined, and the diagonal of $X\times X$ is $\nu$-negligible. Since $\nu$ is obtained by weighting a Borel measure, namely $\mu_D\times \mu_D$, by a Borel map, namely $d^{\: -2D}(\cdot,\cdot)$, it follows that $\nu$ is Borel on $X\times X$. The $\sigma$-finiteness of $\nu$, as well as the $\sigma$-compactness of the ``slashed square'' $X\times X- \mathrm{diag}$, follow by writing $X\times X- \mathrm{diag}=\bigcup_{n\geq 1}  \big\{(x,y): d(x,y)\geq 1/n\big\}$. When restricted to $X\times X- \mathrm{diag}$, $\nu$ is a Borel measure which is finite on compact subsets. The regularity of $\nu$ is automatic: it follows from \cite[Thm.2.18]{Ru} that, on a locally compact and $\sigma$-compact metric space, a Borel measure which is finite on compact sets is a Radon measure.
\end{proof}

The key property of $\nu$ is its M\"obius invariance.

\begin{lem}\label{split conformal}
Assume $\mu_D(X)<\infty$. Then $\nu$ is invariant for the diagonal action of $\Mob(X)$.
\end{lem}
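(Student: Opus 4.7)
The plan is to imitate the computation in Lemma~\ref{invariance for free}: fix $g\in\Mob(X)$ and $F\in L^1(X\times X,\nu)$, write out $\int g.F\:\D\nu$ as a double integral against $d^{-2D}(x,y)\D\mu_D(x)\D\mu_D(y)$, perform the substitution $x\mapsto gx$, $y\mapsto gy$, and then dispatch the resulting Jacobian and distorted distance using the two preceding lemmas.

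In detail, after the change of variables the integrand becomes $F(x,y)\: d^{-2D}(gx,gy)$ integrated against $\D(g^*\mu_D)(x)\D(g^*\mu_D)(y)$. Applying Lemma~\ref{Radon Nikodym} twice converts the pushforward measures into $|g'|^D(x)|g'|^D(y)\D\mu_D(x)\D\mu_D(y)$, contributing a factor $|g'|^D(x)|g'|^D(y)$. On the other hand, the geometric mean-value property $(*)$ from Lemma~\ref{alternate} gives
\[
d^{-2D}(gx,gy)=|g'|^{-D}(x)\:|g'|^{-D}(y)\: d^{-2D}(x,y),
\]
so the two $|g'|$ factors cancel exactly, and what remains is $F(x,y)\: d^{-2D}(x,y)\D\mu_D(x)\D\mu_D(y)$, i.e.\ $\int F\:\D\nu$.

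There is no real obstacle here: the definition of $\nu$ was engineered precisely so that the Jacobian from Lemma~\ref{Radon Nikodym} annihilates the conformal distortion of the distance from Lemma~\ref{alternate}. The only small points worth noting are that the change of variables is legitimate because $\nu$ is $\sigma$-finite and the diagonal is $\nu$-negligible (Lemma~\ref{nature of nu}), that $|g'|^D$ is a bona fide Radon--Nikodym density (the positivity and continuity of $|g'|$ ensure it is locally bounded above and below), and that by linearity and monotone approximation it suffices to verify the identity for non-negative $F$.
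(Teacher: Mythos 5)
Your proposal is correct and follows essentially the same route as the paper: the identical change of variables, followed by Lemma~\ref{Radon Nikodym} for the Jacobian and the geometric mean-value property $(*)$ for the distance distortion, with the two factors cancelling by design. The extra remarks on $\sigma$-finiteness, negligibility of the diagonal, and reduction to non-negative $F$ are fine and merely make explicit what the paper leaves implicit.
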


\begin{proof}
By the $\sigma$-finiteness of $\nu$, it suffices to show that sets of finite measure are invariant. This is shown as in the proof of Lemma~\ref{invariance for free}. For $F \in L^1(X\times X, \nu)$ and $g\in\Mob(X)$ we have:
\begin{align*}
\int g . F \D \nu &=\iint F(g^{-1}x,g^{-1} y)\: d^{\: -2D}(x,y)\D\mu_D(x)\D\mu_D(y)\\
&= \iint F(x, y)\: d^{\: -2D}(gx,gy)\D g^*\mu_D(x)\D g^*\mu_D(y)\\
&= \iint F(x, y)\: d^{\: -2D}(gx,gy)\: |g'|^D(x)\:|g'|^D(y) \D\mu_D(x)\D\mu_D(y)\\
&=\iint F(x, y)\: d^{\: -2D}(x,y) \D\mu_D(x)\D\mu_D(y)= \int F \D \nu
\end{align*}
The second equality is a change of variables, the third relies on Lemma~\ref{Radon Nikodym}, and the fourth is due to the geometric mean-value property $(*)$.
\end{proof}

\subsection{A cocycle for the M\"obius action on $X\times X$} At this point, we have a linear isometric action of $\Mob(X)$ on $L^p(X\times X,\nu)$ for each $p$. We need a cocycle in order to get an affine isometric action, and this arises as follows.  The metric derivatives satisfy the chain rule 
\begin{align}\label{chain rule}
|(gh)'|(x)=|g'|(hx)\: |h'|(x)
\end{align}
for all $g,h\in \Mob(X)$ and $x\in X$. In other words, $g\mapsto\log \big|(g^{-1})'\big|$ is a cocycle for the action of $\Mob(X)$ on $X$. Hence 
\[g\mapsto c_g(x,y):=\log \big|(g^{-1})'\big|(x)-\log \big|(g^{-1})'\big|(y)\]
defines a cocycle for the diagonal action of $\Mob(X)$ on $X\times X$. Now $g\mapsto c_g$ is a cocycle for the linear action of $\Mob(X)$ on $L^p(X\times X,\nu)$ if and only if each $c_g$ is in $L^p(X\times X,\nu)$. This turns out to be the case for all $p$ greater than $D$, the Hausdorff dimension of $X$, as soon as we require $\mu_D$ to be \emph{Ahlfors regular}. This means that the measure of balls, viewed as a function of the radius $r\in [0,\diam X]$, satisfies
\[\mu_D(r\mathrm{-ball}) \asymp r^D.\]  
In particular, if $\mu_D$ is Ahlfors regular then $\mu_D(X)<\infty$.

\begin{lem}\label{integrability of distance}
Assume that $\mu_D$ is Ahlfors regular. Then the metric $d$ is not in $L^D(X\times X,\nu)$, but it does belong to the weak $L^D$-space $L^{D,\infty} (X\times X,\nu)$. Consequently, $d\in L^p(X\times X,\nu)$ for each $p>D$, and $\nu$ is an infinite measure on $X\times X$.
\end{lem}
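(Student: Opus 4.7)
The plan is as follows. All three integrability claims concern moments of the distance $d$ against the weighted measure $\nu$. Unpacking definitions, $\|d\|_{L^p(\nu)}^p=\iint d(x,y)^{p-2D}\D\mu_D(x)\D\mu_D(y)$ and $\nu(\{d>t\})=\iint_{d(x,y)>t} d(x,y)^{-2D} \D\mu_D(x) \D\mu_D(y)$. I intend to estimate the inner $x$-integrals uniformly in $y$ by dyadically partitioning $X$ around $y$ and applying Ahlfors regularity on each shell, then integrate in $y$ using $\mu_D(X)<\infty$.

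For the weak $L^D$ bound, fix $y$ and $t>0$ and partition $\{x:d(x,y)>t\}$ into the shells $S_k=\{x: 2^k t\le d(x,y)<2^{k+1}t\}$, $k=0,1,2,\dots$ (with shells beyond $\diam X$ empty). On $S_k$ the integrand is at most $(2^k t)^{-2D}$, and Ahlfors regularity gives $\mu_D(S_k)\le \mu_D(B(y,2^{k+1}t))\cleq (2^{k+1}t)^D$. Summing the resulting geometric series in $2^{-kD}$ yields $\int_{d(x,y)>t} d(x,y)^{-2D}\D\mu_D(x)\cleq t^{-D}$, uniformly in $y$. Integrating against $\D\mu_D(y)$---whose total mass is finite by Ahlfors regularity---gives $\nu(\{d>t\})\cleq t^{-D}$, so $d\in L^{D,\infty}(\nu)$.

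For the failure of strong $L^D$, Tonelli reduces the task to proving $\int d(x,y)^{-D}\D\mu_D(x)=\infty$ for every $y$. I use shrinking shells $A_k=\{x:\lambda^{k+1}R\le d(x,y)<\lambda^k R\}$ with $R=\diam X$ and a ratio $\lambda\in(0,1)$ to be chosen. Writing $c r^D\le \mu_D(B(y,r))\le C r^D$ for the two-sided Ahlfors bound, I get $\mu_D(A_k)\ge (c-C\lambda^D)(\lambda^k R)^D$. Choosing $\lambda$ small enough so that $C\lambda^D<c$ ensures $\mu_D(A_k)\cgeq \lambda^{kD}$, while on $A_k$ the integrand is at least $(\lambda^k R)^{-D}$. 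Each shell thus contributes at least a fixed positive constant to the integral, and summing over infinitely many shells produces the divergence.

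The remaining assertions are then free. For $p>D$, the layer-cake formula combined with the weak bound above gives $\int d^p\D\nu = p\int_0^{\diam X} t^{p-1}\nu(\{d>t\})\D t \cleq \int_0^{\diam X} t^{p-1-D}\D t<\infty$, so $d\in L^p(\nu)$. That $\nu$ is infinite is then immediate since $d\le \diam X$: a finite $\nu(X\times X)$ would force $\int d^D\D\nu \le (\diam X)^D \nu(X\times X)<\infty$, contradicting what was just proved. The one point that needs real care is the tuning of $\lambda$ in the divergence argument, so that the Ahlfors lower bound on the outer ball is not swallowed by the upper bound on the inner ball; the rest amounts to routine dyadic bookkeeping.
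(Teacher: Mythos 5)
Your proof is correct and follows essentially the same route as the paper: both reduce everything to estimates on the inner integral $\int d(x,y)^{-2D}\,\mathrm{d}\mu_D(x)$ (resp.\ $\int d(x,y)^{-D}\,\mathrm{d}\mu_D(x)$) that are uniform in $y$ via the two-sided Ahlfors bounds, then deduce $L^p$-integrability for $p>D$ from the weak-type bound and boundedness of $d$, and infiniteness of $\nu$ from $d\notin L^D(\nu)$. The only difference is presentational: you discretize into geometric shells where the paper integrates the distribution function (layer-cake), and your care in choosing $\lambda$ so that $C\lambda^D<c$ is exactly the point that makes the divergence argument honest.
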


\begin{proof} To show that $d$ is in the weak $L^D$-space $L^{D,\infty} (X\times X,\nu)$, we have to check that
\[\nu\big( \{(x,y): d(x,y)>t\}\big)\cleq t^{-D}\]
as $t$ runs over positive reals. To that end, it suffices to show that for each fixed $y\in X$ we have
\[\int_{d(x,y)>t} d^{\: -2D}(x,y) \D\mu_D(x)\cleq t^{-D}\] 
independent of $y$; and indeed
\begin{align*}
\int_{d(x,y)>t} d^{\: -2D}(x,y) \D\mu_D(x)&=\int^{\: t^{-2D}}_0 \mu_D\big(\{x: d^{\: -2D}(x,y)>s\}\big) \D s \\
&=\int^{\: t^{-2D}}_{(\diam X)^{-2D}}\: \mu_D\big(\{x: d(x,y)<s^{-\frac{1}{2D}}\}\big) \D s\\
&\leq \int^{\: t^{-2D}}_{(\diam X)^{-2D}}\: Cs^{-\frac{1}{2}} \D s\leq  (2C)\: t^{-D}. 
\end{align*}
The inequality in the last line uses the upper polynomial bound on the measure of balls. Next, we show that $d$ is not in the subspace $L^D (X\times X,\nu)\subseteq L^{D,\infty} (X\times X,\nu)$, and here we use the lower polynomial bound on the measure of balls. For each $y\in X$ we have:
\begin{align*}
\int d^{\: -D}(x,y) \D\mu_D(x)&=\int_0^\infty \mu_D\big(\{x: d^{\: -D}(x,y)>s\}\big) \D s \\
&=\int^{\infty}_{(\diam X)^{-D}}\: \mu_D\big(\{x: d(x,y)<s^{-\frac{1}{D}}\}\big) \D s\\
& \geq \int^{\infty}_{(\diam X)^{-D}}\: c \:s^{-1} \D s=\infty
\end{align*}
Integrating with respect to $y$, we get $\|d\|_{L^D (\nu)}=\infty$.

The second part of the lemma follows by using the boundedness of $d$. As $d$ is in weak $L^D(\nu)$ and in $L^\infty(\nu)$, interpolation yields that $d$ is in $L^p(\nu)$ for each $p>D$. Finally, $d$ not in $L^D(\nu)$ implies in particular that $\nu$ is infinite.
\end{proof}

On the other hand, we may bound the cocycle by the metric as follows:

\begin{lem}\label{distance bounds cocycle}
We have 
\[|c_g|\cleq  \frac{\max |g'|}{\min |g'|} \: d.\]
\end{lem}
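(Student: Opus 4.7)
My plan is to bound the oscillation of $\log|(g^{-1})'|$ across $X$ using the Lipschitz estimate on $\sqrt{|g'|}$ provided by Lemma~\ref{Lipschitz}. The first move is the simple reformulation
\[c_g(x,y) = 2\bigl(\log\sqrt{|(g^{-1})'|}(x) - \log\sqrt{|(g^{-1})'|}(y)\bigr).\]
The point of this rewriting is that Lemma~\ref{Lipschitz}, applied to the Möbius homeomorphism $g^{-1}$, gives a Lipschitz control directly on $\sqrt{|(g^{-1})'|}$; passing through the square root turns out to produce exactly the first-power ratio $\max|g'|/\min|g'|$, rather than the $3/2$-power ratio that a naive application of a Lipschitz bound for $|(g^{-1})'|$ itself would yield.

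Next I would apply the elementary inequality $|\log a - \log b| \leq |a-b|/\min(a,b)$ to $a = \sqrt{|(g^{-1})'|}(x)$ and $b = \sqrt{|(g^{-1})'|}(y)$, using that $\min(a,b) \geq \sqrt{\min|(g^{-1})'|}$. Combined with the estimate from Lemma~\ref{Lipschitz} applied to $g^{-1}$, this gives
\[|c_g(x,y)| \leq \frac{8}{\diam X}\cdot \frac{\max|(g^{-1})'|}{\min|(g^{-1})'|}\cdot d(x,y).\]
The final step is to translate the $g^{-1}$-quantities into $g$-quantities. The chain rule \eqref{chain rule} applied to $g^{-1}\circ g = \mathrm{id}$, together with the fact that the identity has metric derivative equal to $1$, gives $|(g^{-1})'|(gx)\,|g'|(x) = 1$, hence $\max|(g^{-1})'| = 1/\min|g'|$ and $\min|(g^{-1})'| = 1/\max|g'|$. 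Substituting, the ratio $\max|(g^{-1})'|/\min|(g^{-1})'|$ coincides with $\max|g'|/\min|g'|$, and the claimed bound $|c_g| \cleq \frac{\max|g'|}{\min|g'|}\, d$ falls out.

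There is no real obstacle here; the argument is essentially mechanical. The only mildly clever ingredient is the choice to apply Lemma~\ref{Lipschitz} to the square root $\sqrt{|(g^{-1})'|}$ rather than to $|(g^{-1})'|$ itself, which saves exactly one factor of $1/\sqrt{\min|(g^{-1})'|}$ and brings the exponent of the ratio from $3/2$ down to $1$.
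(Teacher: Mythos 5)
Your proposal is correct and follows essentially the same route as the paper: both pass through $\sqrt{|\cdot'|}$ via the Lipschitz estimate \eqref{lipschitz estimate}, use the elementary bound $|\log a-\log b|\leq |a-b|/\min(a,b)$, and invoke the chain-rule identity $|(g^{-1})'|=1/g.|g'|$ to identify $\max|(g^{-1})'|/\min|(g^{-1})'|$ with $\max|g'|/\min|g'|$. The only cosmetic difference is that the paper first proves the Lipschitz bound for $\log|g'|$ and then substitutes $g^{-1}$, whereas you run the argument directly on $g^{-1}$ and convert at the end.
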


\begin{proof}
Let $g\in \Mob(X)$. Using the fact that $|\log a-\log b|\leq |a-b|/m$ whenever $a,b\geq m>0$, together with the Lipschitz estimate \eqref{lipschitz estimate}, we get
\begin{align*}
\Big|\log |g'|(x)-\log |g'|(y)\Big|&=2\:\Big|\log \sqrt{|g'|}(x)-\log \sqrt{|g'|}(y)\Big|\\
&\leq \frac{2}{\sqrt{\min |g'|}}\: \Big|\sqrt{|g'|}(x)-\sqrt{|g'|}(y)\Big|\leq \frac{8}{\diam X}\:\frac{\max |g'|}{\min |g'|} \: d(x,y).
\end{align*}
The chain rule \eqref{chain rule} implies that $|(g^{-1})'|=1/g.|g'|$, so $\max |(g^{-1})'|/\min |(g^{-1})'|=\max |g'|/\min |g'|$. Thus, the Lipschitz estimate for $\log |g'|$ becomes
\[|c_g(x,y)|\leq \frac{8}{\diam X}\:  \frac{\max |g'|}{\min |g'|} \: d(x,y),\]
as desired.
\end{proof}

Lemma~\ref{integrability of distance} and Lemma~\ref{distance bounds cocycle} imply that our cocycle $c$ takes values in $L^p(X\times X,\nu)$ for each $p>D$. Summarizing, we have proved the following:

\begin{prop}\label{cocycle} Assume that $\mu_D$ is Ahlfors regular. Then, for each $p>D$,
\[g\mapsto c_g(x,y)=\log |(g^{-1})'|(x)-\log |(g^{-1})'|(y)\]
is a cocycle for the linear isometric action of $\Mob(X)$ on $L^p(X\times X,\nu)$.
\end{prop}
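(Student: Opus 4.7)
The plan is to assemble the proposition from the two ingredients already in hand: the \emph{algebraic} cocycle identity, which is a formal consequence of the chain rule, and the \emph{analytic} integrability statement $c_g \in L^p(X\times X,\nu)$ for $p>D$, which will follow by combining Lemma~\ref{distance bounds cocycle} with Lemma~\ref{integrability of distance}. There is no genuine obstacle here; the work was done in the preceding lemmas, and the proposition is essentially their packaging.

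First, I would verify the pointwise cocycle relation $c_{gh} = g.c_h + c_g$ for $g,h\in\Mob(X)$. Applying the chain rule \eqref{chain rule} to $(gh)^{-1} = h^{-1}g^{-1}$ gives
\[
\bigl|(gh)^{-1}\bigr|'(x) = \bigl|(h^{-1})'\bigr|(g^{-1}x)\:\bigl|(g^{-1})'\bigr|(x),
\]
so taking logarithms and then forming the difference in two variables $(x,y)$ yields
\[
c_{gh}(x,y) = c_h(g^{-1}x,g^{-1}y) + c_g(x,y) = (g.c_h)(x,y) + c_g(x,y).
\]
This is the set-theoretic cocycle identity with respect to the measure-preserving action of $\Mob(X)$ on $(X\times X,\nu)$ established in Lemma~\ref{split conformal}.

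Second, I would show $c_g \in L^p(X\times X,\nu)$ for every $p > D$. By Lemma~\ref{distance bounds cocycle},
\[
|c_g(x,y)| \cleq \frac{\max |g'|}{\min |g'|}\: d(x,y),
\]
and the prefactor is a finite positive constant for each fixed $g$, since $|g'|$ is a strictly positive continuous function on the compact space $X$ (Lemma~\ref{alternate} and Lemma~\ref{Lipschitz}). By Lemma~\ref{integrability of distance}, $d \in L^p(X\times X,\nu)$ whenever $p>D$. Therefore $c_g \in L^p(X\times X,\nu)$ for each $g\in\Mob(X)$ and each $p>D$.

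Finally, I would combine these two observations. Since the diagonal action of $\Mob(X)$ on $(X\times X,\nu)$ is measure-preserving, the formula $(g,F)\mapsto g.F$ defines a linear isometric action on $L^p(X\times X,\nu)$, as discussed in Section~\ref{general}. The map $g\mapsto c_g$ takes values in this $L^p$-space by the second step, and satisfies the cocycle identity $c_{gh}=g.c_h+c_g$ in $L^p(X\times X,\nu)$ by the first step, completing the proof.
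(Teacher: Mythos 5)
Your proof is correct and follows exactly the route the paper takes: the algebraic cocycle identity is read off from the chain rule \eqref{chain rule} applied to $(gh)^{-1}=h^{-1}g^{-1}$, and membership of $c_g$ in $L^p(X\times X,\nu)$ for $p>D$ is obtained by combining Lemma~\ref{distance bounds cocycle} with Lemma~\ref{integrability of distance}, the linear action being isometric by the invariance of $\nu$ (Lemma~\ref{split conformal}). The paper presents this as a summary of the preceding lemmas rather than as a formal proof, but the content is identical to yours.
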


\subsection{A topological perspective} The groups we are ultimately interested in, namely hyperbolic groups, are discrete. Proposition~\ref{cocycle}, in which we are treating the M\"obius group $\Mob(X)$ as a discrete group, suffices for our purposes. However, the M\"obius context we have developed so far has a topological layer as well, and we will look at it before moving on to the case of hyperbolic groups. 

Recall that the natural topology on the space $C(Z,Y)$ of continuous maps between a compact space $Z$ and a metric space $Y$ is the compact-open topology or, equivalently, the topology of uniform convergence. This topology is induced by the metric 
\[\dist (f_1,f_2)=\sup_{z\in Z}d_Y (f_1 z,f_2 z).\]
For a compact metric space $Z$, the group of self-homeomorphisms $\mathrm{Homeo}(Z)$ is a topological group under the topology of uniform convergence. In general, $\mathrm{Homeo}(Z)$ is not locally compact.

\begin{prop} Endow the M\"obius group of $X$ with the topology of uniform convergence. Then we have the following:
\begin{itemize}
\item[(i)] the metric differentiation map $\Mob(X)\to C(X,\R)$, given by $g\mapsto |g'|$, is continuous;
\item[(ii)] The topological group $\Mob(X)$ is locally compact and $\sigma$-compact. It contains the isometry group $\mathrm{Isom}(X)$ as a compact subgroup;
\item[(iii)] for each $p>D$, the affine isometric action of $\Mob(X)$ on $L^p(X\times X,\nu)$, given by $(g,F)\mapsto g. F+c_g$, is continuous. 
\end{itemize}
\end{prop}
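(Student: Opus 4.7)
The plan is to prove (i) first and bootstrap (ii) and (iii) from it.

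For (i), the key tool is the explicit local formula
\[|g'|(x) = \frac{d(gx, gu)\, d(gx, gv)\, d(u,v)}{d(x,u)\, d(x,v)\, d(gu, gv)} \qquad (x \in X \setminus \{u,v\})\]
obtained in the proof of Lemma~\ref{alternate}. Since $X$ has no isolated points and positive Hausdorff dimension, it is infinite, so I can pick four distinct points $u_1, v_1, u_2, v_2 \in X$ and a $\delta > 0$ small enough that the four closed $\delta$-balls around them are pairwise disjoint. The compact sets $X_i = X \setminus (B(u_i, \delta) \cup B(v_i, \delta))$ then cover $X$. On each $X_i$ the right-hand side above with base points $(u_i, v_i)$ is a continuous rational expression in finitely many distance values, whose denominator is bounded below by $\delta^2\, d(g u_i, g v_i)$. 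If $g_n \to g$ uniformly, then $d(g_n a, g_n b) \to d(ga, gb)$ uniformly in $a, b$ by the triangle inequality, so $|g_n'| \to |g'|$ uniformly on $X_i$, and hence on $X$. This gives the continuity of $g \mapsto |g'|$ into $C(X, \R)$.

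For (ii), the subgroup $\mathrm{Isom}(X)$ consists of $1$-Lipschitz maps, hence is equicontinuous and relatively compact in $C(X, X)$ by Arzel\`a--Ascoli; being isometric passes to uniform limits, so it is closed and therefore compact. For local compactness of $\Mob(X)$ at $e$, apply (i) at $|e'| \equiv 1$ to get a neighborhood $U$ of $e$ on which $|g'|$ takes values in $[1-\e, 1+\e]$; the chain rule \eqref{chain rule} then yields the same control on $|g^{-1}|'$. Thus $U$ and $U^{-1}$ are equi-bi-Lipschitz, so $\overline U$ is compact in $C(X, X)$ by Arzel\`a--Ascoli. Cross-ratio preservation and the bi-Lipschitz property pass to uniform limits, and the uniform bi-Lipschitz bound forces surjectivity of the limit (extract an Arzel\`a--Ascoli subsequential limit of the inverses and compose), so $\overline U \subseteq \Mob(X)$. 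Translation gives local compactness everywhere, and $\sigma$-compactness comes from $\Mob(X) = \bigcup_{M \geq 1} \{g : |g'|(X) \subseteq [1/M, M]\}$, each piece compact by the same argument. Continuity of multiplication and inversion is then routine once (i) provides uniform local Lipschitz control.

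For (iii), write the affine action as the sum of the linear action $(g, F) \mapsto g.F$ and the cocycle $g \mapsto c_g$, and check each for continuity. For the cocycle, (i) together with continuity of inversion from (ii) gives $\log|g_n^{-1}|' \to \log|g^{-1}|'$ uniformly on $X$ (since $|g^{-1}|'$ is bounded away from $0$), hence $c_{g_n} \to c_g$ uniformly on $X \times X$. For $g_n$ in a neighborhood of $g$ the ratio $\max|g_n'|/\min|g_n'|$ is uniformly bounded by some $M$, so Lemma~\ref{distance bounds cocycle} supplies the majorant $|c_{g_n} - c_g| \cleq M\, d$, with $d \in L^p(\nu)$ by Lemma~\ref{integrability of distance}; dominated convergence gives $c_{g_n} \to c_g$ in $L^p(\nu)$. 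For the linear action, the isometry $\|g.F - g.F'\|_p = \|F - F'\|_p$ reduces continuity to the case of a continuous $F$ with compact support in $X \times X \setminus \mathrm{diag}$ (dense by the Radon property in Lemma~\ref{nature of nu}). If $F$ has support contained in $\{d \geq \eta\}$, then for $g_n$ near $g$ the translated supports $g_n K$ all lie in the compact set $\{d \geq \eta/M\}$, which has finite $\nu$-measure; uniform continuity of $F$ together with uniform convergence $g_n^{-1} \to g^{-1}$ give uniform convergence of $g_n.F$ to $g.F$ on this common compact set, hence $L^p$ convergence.

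The only step requiring genuine thought is the upgrade from pointwise to uniform convergence in (i); the two-pair covering trick handles it cleanly. Everything else is a careful assembly of Arzel\`a--Ascoli, dominated convergence, and the density--isometry argument familiar from isometric representations on $L^p$-spaces.
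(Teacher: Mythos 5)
Your proof is correct, and its overall architecture matches the paper's: part (i) via the local formula for $|g'|$ from the proof of Lemma~\ref{alternate}, part (ii) via Arzel\`a--Ascoli, and part (iii) via density of $C_c(X\times X\setminus\mathrm{diag})$ for the linear part plus an integrability bound for the cocycle. The differences are in two individual steps. For (i), the paper proves the quantitative estimate \eqref{alpha bound}, $\dist(|g'|,1)\cleq\dist(g,\mathrm{id})$ near the identity, by fixing a separation constant $\kappa$ so that every $x$ admits base points $u,v$ with all three mutual distances at least $\kappa$; your two-chart covering by the sets $X\setminus(B(u_i,\delta)\cup B(v_i,\delta))$ gives only qualitative uniform convergence, which is all that (i) asserts, but you then have to recover from it the neighborhood of the identity on which $|g'|$ takes values in $[1-\e,1+\e]$, a fact the paper reads off directly from \eqref{alpha bound}. (Two minor slips that do not affect the argument: those sets $X_i$ are open rather than compact, and when closing up $\mathrm{Isom}(X)$ you should note that an isometric self-map of a compact metric space is automatically surjective.) For the cocycle in (iii), the paper interpolates $\|c_g\|_{L^p}^p\le\|c_g\|_\infty^{p-r}\|c_g\|_{L^r}^r$ with $r\in(D,p)$ and uses $\|c_g\|_\infty=\log(\max|g'|/\min|g'|)\to0$, whereas you run dominated convergence against the majorant $\cleq d\in L^p(\nu)$ supplied by Lemmas~\ref{distance bounds cocycle} and~\ref{integrability of distance}; your version is arguably more transparent and verifies continuity at every $g$ directly rather than only at the identity. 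You are also more explicit than the paper about why uniform limits of uniformly bi-Lipschitz cross-ratio-preserving maps are again surjective M\"obius homeomorphisms, a point that the paper's appeal to Arzel\`a--Ascoli for the sets $\{g:\max|g'|\le R\}$ quietly requires.
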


\begin{proof}  We start by showing that
\begin{align}\label{alpha bound} \dist (|g'|, 1)\leq C^{-1}\:\dist (g,\mathrm{id}), \textrm{ provided that } \dist (g,\mathrm{id})\leq C
\end{align} 
for some $C>0$ depending on $X$ only. 

There is $\kappa>0$ such that no two open balls of radius $\kappa$ cover $X$; the easy proof, by contradiction, is left to the reader. Now assume that $D(g):=\dist (g,\mathrm{id})\leq \kappa/10$. Fix $x\in X$. By the defining property of $\kappa$, there are $u,v\in X$ such that $d(x,u)\geq \kappa$, $d(u,v)\geq \kappa$, $d(x,v) \geq \kappa$. Recall from the proof of Lemma~\ref{alternate} the following local formula: 
\begin{align*}
 |g'|(x)=\frac{d(gx,gu)}{d(x,u)}\: \frac{d(gx,gv)}{d(x,v)}\: \frac{d(u,v)}{d(gu,gv)}
 \end{align*}
We have $d(x,u)-2D(g)\leq d(gx,gu) \leq d(x,u)+2D(g)$. As $d(x,u)\geq \kappa$, we obtain
\[1-2\kappa^{-1}D(g)\leq \frac{d(gx,gu)}{d(x,u)} \leq 1+2\kappa^{-1}D(g).\]
The same bounds are valid for $x$ and $v$, and for $u$ and $v$, instead of $x$ and $u$. Therefore
\begin{align*}
\frac{(1-2\kappa^{-1}D(g))^2}{1+2\kappa^{-1}D(g)}-1\leq |g'|(x)-1\leq \frac{(1+2\kappa^{-1}D(g))^2}{1-2\kappa^{-1}D(g)}-1.
\end{align*}
The lower bound is greater than $-6\kappa^{-1}D(g)$, whereas the upper bound is at most $8\kappa^{-1}D(g)$. The claim \eqref{alpha bound} is thus proved, with $C:= \kappa/10$.

(i) By \eqref{alpha bound}, the metric differentiation map is continuous at the identity element of $\Mob(X)$. The continuity on $\Mob(X)$ follows, since $\dist (|g'|, |h'|)\leq (\max |h'|)\: \dist (|(g h^{-1})'|, 1)$ for all $g,h$ in $\Mob(X)$ by using the chain rule \eqref{chain rule}.

(ii) It is clear that $\Mob(X)$ is a topological group, and a closed subgroup of $\mathrm{Homeo}(X)$. For each $R\geq 1$ the subset $\{g\in \Mob(X): \max |g'|\leq R\}$ is closed and equicontinuous, hence compact by the Arzel\`a - Ascoli theorem. On the one hand, it follows that $\Mob(X)$ is $\sigma$-compact. On the other hand, \eqref{alpha bound} implies that the closed ball of radius $C$ around the identity is contained in $\{g\in \Mob(X): \max |g'|\leq 2\}$. Since the latter is compact, the former is a compact neighborhood of the identity, and we conclude that $\Mob(X)$ is locally compact. In what concerns the compactness of the isometry group, note that $\mathrm{Isom}(X)=\{g\in \Mob(X):\max |g'|\leq 1\}$.
 
(iii) By definition, an affine isometric action of a topological group on a Banach space is continuous if the linear part of the action is strongly (that is, pointwise) continuous, and the cocycle is (norm) continuous.

Fix $p>D$. First, we show that the linear isometric action $(g,F)\mapsto g. F$ of $\Mob(X)$ on $L^p(X\times X,\nu)$ is strongly continuous. To that end, it suffices to check that for a dense set of functions $F$ in $L^p(X\times X,\nu)$ we have $g.F\to F$ in $L^p(X\times X,\nu)$ whenever $g\to\mathrm{id}$ in $\Mob(X)$. Recall from Lemma~\ref{nature of nu} that $\nu$ is a Radon measure on the locally compact space $X^{2-}:= X\times X - \mathrm{diag}$. Hence $C_c(X^{2-})$, the subspace of compactly-supported continuous functions on $X^{2-}$, is dense in $L^p(X^{2-},\nu)=L^p(X\times X,\nu)$. Let $F\in C_c(X^{2-})$. If $g\to\mathrm{id}$ in $\Mob(X)$ then $g.F\to F$ uniformly. Since $\|g.F-F\|_{L^p(X^{2-},\nu)}\leq \|g.F-F\|_\infty \: \nu(\mathrm{supp}\: F)$ and $\nu(\mathrm{supp}\: F)<\infty$, we conclude that $g.F\to F$ in $L^p(X\times X,\nu)$.

To show that the cocycle $g\mapsto c_g$ is continuous, it suffices to check continuity at the identity element of $\Mob(X)$; the cocycle rule will then imply continuity at every element of $\Mob(X)$. Let $r \in (D,p)$. Then
\[\|c_g\|^p_{L^p(\nu)} \leq \|c_g\|_\infty^{p-r} \|c_g\|^{r}_{L^r(\nu)}.\]
We have
\[ \|c_g\|_\infty=\log \frac{\max |g'|}{\min |g'|}, \qquad \|c_g\|_{L^r(\nu)}\cleq_r\: \frac{\max |g'|}{\min |g'|}, \]
the latter by Lemma~\ref{distance bounds cocycle}. Therefore
\[
\|c_g\|^p_{L^p(\nu)} \cleq_r \Big(\log \frac{\max |g'|}{\min |g'|}\Big)^{p-r}\:\Big(\frac{\max |g'|}{\min |g'|}\Big)^{r},
\]
from which it follows that $\|c_g\|_{L^p(\nu)}\to 0$ as $g\to \mathrm{id}$ in $\Mob(X)$.
\end{proof}

Without further assumptions, the properness of the affine isometric action of $\Mob(X)$ on $L^p(X\times X,\nu)$ seems elusive.

\section{Interlude: Visual metrics on boundaries of hyperbolic groups}
The next goal is to apply the construction of Section~\ref{M-actions} to the action of a non-elementary hyperbolic group on its boundary. In order to do so, we need a metric on the boundary such that the group action is by M\"obius maps, and such that the corresponding Hausdorff measure is Ahlfors regular.

\subsection{Visual metrics induced by the word metric} Let $\G$ be a non-elementary hyperbolic group, and consider the Cayley graph of $\G$ with respect to a finite generating set. Throughout, we choose the identity element as the basepoint.

In the beginning there is the word metric, and the word metric is geodesic. Let $\bd \G$ denote the Gromov boundary of the Cayley graph. Topologically, $\bd \G$ is a canonical compact space without isolated points on which $\G$ acts by homeomorphisms. The metric structure on $\bd\G$ is, however, more subtle. The classical approach runs as follows. First, the Gromov product $(\cdot,\cdot)$ on $\G\times \G$ is extended to $\overline{\G}\times \overline{\G}$, where $\overline{\G}=\G\cup \bd \G$ is the boundary compactification. Such an extension involves a somewhat ad-hoc choice and, a priori, it is neither unique nor continuous. Next, the expression $\exp(-\e(\cdot,\cdot))$ is turned into a compatible metric on the boundary. More precisely, for each sufficiently small parameter $\e>0$ there is a \emph{visual metric} $d_\e\asymp \exp(-\e(\cdot,\cdot))$ which agrees with the canonical topology on $\bd\G$. (See, for instance, Bourdon \cite{B} and Kapovich - Benakli \cite{KB} for more details.) For a visual metric $d_\e$, the geometric mean-value property $(*)$ holds up to multiplicative constants depending on the visual parameter $\e$ and the hyperbolicity constant $\delta$. Clearly, starting with a ``quasi-definition'' of the extended Gromov product snowballs into a quasified metric structure on $\bd\G$. 
 
The Patterson - Sullivan theory for non-elementary hyperbolic groups, due to Coornaert \cite{C}, describes the Hausdorff dimensions and the Hausdorff measures associated to visual metrics on the boundary. Let
\[e(\G)=\limsup_{n\to \infty}\frac{\log\: \#\{g\in \G: l(g)\leq n\}}{n}\]
be the growth exponent of $\G$, where $l$ denotes the word length. Then $0< e(\G)<\infty$, and the following holds:

\begin{prop}[\cite{C}]\label{Hausdorff old} Equip $\bd \G$ with a visual metric $d_\e$. If $D_\e$ denotes the Hausdorff dimension of $\bd \G$, then $D_\e=e(\G)/\e$ and the $D_\e$-dimensional Hausdorff measure is Ahlfors regular.
\end{prop}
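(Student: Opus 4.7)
The plan is to build a Patterson--Sullivan measure $\mu$ on $\bd\G$ (with respect to the basepoint $1\in\G$) and show directly that it is Ahlfors regular of dimension $e(\G)/\e$ with respect to $d_\e$. Ahlfors regularity of $\mu$ then forces both equalities: $D_\e=e(\G)/\e$, and the $D_\e$-dimensional Hausdorff measure is comparable to $\mu$, hence Ahlfors regular as well.

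First I would run the standard Patterson--Sullivan construction. For $s>e(\G)$, set $P(s)=\sum_{g\in\G}\exp(-s\,l(g))$ and form the probability measure $\mu_s=P(s)^{-1}\sum_g \exp(-s\,l(g))\,\delta_g$ on $\overline{\G}=\G\cup\bd\G$. Take a weak-$*$ limit as $s\downarrow e(\G)$, after Patterson's trick of multiplying by a slowly growing correction factor in case $P$ happens to converge at $s=e(\G)$. The resulting probability measure $\mu$ is supported on $\bd\G$, and a routine estimate comparing $\G$- and $g\G$-Poincar\'e sums yields the quasi-conformal transformation rule
\[
\frac{\D g^*\mu}{\D\mu}(\xi)\asymp \exp\bigl(e(\G)\,(l(g)-2(g,\xi))\bigr)\qquad(g\in\G,\ \xi\in\bd\G),
\]
with implicit constants depending only on $\G$ and on the choice of extension of the Gromov product to the boundary.

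The technical heart is then Sullivan's \emph{shadow lemma}: for $R$ larger than the hyperbolicity constant, the shadow from $1$ of the ball $B(g,R)$ satisfies $\mu(\mathrm{Sh}(g,R))\asymp \exp(-e(\G)\,l(g))$. The upper bound uses the quasi-conformal rule above applied to $g^{-1}$ together with an upper bound on $\mu$ of a fixed-size shadow. The lower bound is the delicate point: it requires knowing that the Poincar\'e series is of divergence type at the critical exponent, so that $\mu$ charges every shadow of positive size --- this is precisely what Coornaert proves using the hyperbolicity of $\G$ (fat quasi-geodesic bigons and an averaging argument). Once the shadow lemma is in hand, the comparison $d_\e(\xi,\omega)\asymp \exp(-\e(\xi,\omega))$ shows that a visual ball $B_{d_\e}(\xi,r)$ with $r=\exp(-\e n)$ coincides, up to bounded distortion, with the shadow of a ball around any geodesic representative of $\xi$ at word-distance $\approx n$ from the basepoint. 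Plugging into the shadow estimate gives $\mu(B_{d_\e}(\xi,r))\asymp r^{e(\G)/\e}$, i.e. Ahlfors regularity of dimension $e(\G)/\e$.

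Finally I would invoke the general fact that if a compact metric space $(\bd\G,d_\e)$ carries a Borel measure satisfying $\mu(B(\xi,r))\asymp r^D$ uniformly in $\xi$ and $0<r\leq\diam\bd\G$, then the Hausdorff dimension equals $D$ and the $D$-dimensional Hausdorff measure is comparable to $\mu$; this follows from a standard covering/Vitali argument. The main obstacle in the whole sketch is the divergence-type statement needed for the shadow lemma lower bound: everything else is bookkeeping with the Gromov product and routine ergodic-theoretic constructions, whereas the divergence of the Poincar\'e series at $s=e(\G)$ is a genuinely geometric input that uses hyperbolicity in an essential way.
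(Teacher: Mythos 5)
The paper offers no proof of this proposition: it is imported verbatim from Coornaert's work \cite{C}, so there is no internal argument to compare against. Your sketch is, in outline, exactly Coornaert's route --- construct a Patterson--Sullivan measure $\mu$ at the critical exponent $e(\G)$, establish its quasi-conformality, prove Sullivan's shadow lemma, identify visual balls $B_{d_\e}(\xi,e^{-\e n})$ with shadows up to bounded distortion to get $\mu(B(\xi,r))\asymp r^{e(\G)/\e}$, and finish with the mass distribution principle --- and as a sketch it is correct. One point I would straighten out is your diagnosis of where the difficulty sits: the lower bound in the shadow lemma does not take divergence of the Poincar\'e series at $s=e(\G)$ as an input. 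What it actually needs is that $\mu$ gives uniformly positive mass to the complement of every sufficiently small visual ball (equivalently, that $\mu$ does not concentrate at a single boundary point); for a non-elementary group this follows from quasi-invariance of $\mu$ together with the fact that its support is all of $\bd\G$, which is perfect, so no global fixed point can carry the measure. Divergence of the Poincar\'e series at the critical exponent --- equivalently the sharp growth estimate $\#\{g: l(g)\leq n\}\asymp e^{e(\G)n}$ --- is then a \emph{consequence} of the resulting Ahlfors regularity, not a prerequisite for it. (A smaller quibble: your transformation rule for $\mathrm{d}g^*\mu/\mathrm{d}\mu$ has the Gromov product evaluated at the wrong translate of $\xi$; compare the Poisson kernel convention $P_g(\omega)=q^{-|g|+2(g,\omega)}$ for $\mathrm{d}(g_*\mu)/\mathrm{d}\mu$ used in Section 3. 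This is pure bookkeeping and does not affect the architecture of the argument.)
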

 
\subsection{Visual metrics induced by Mineyev's hat metric}\label{this is recalled later} In the $\mathrm{CAT(-1)}$ setting, the metric structure on the boundary is better behaved. Namely, let $X$ be a proper $\mathrm{CAT(-1)}$ space with a fixed basepoint $o\in X$. Then the Gromov product $(\cdot,\cdot)_o$ on $X\times X$ extends continuously to $\overline{X}\times \overline{X}$, and for each $\e\in (0,1]$ the expression $\exp(-\e(\cdot,\cdot)_o)$ is a compatible metric on the boundary $\bd X$ (see Bourdon \cite{B}). If we equip $\bd X$ with a visual metric $\exp(-\e(\cdot,\cdot)_o)$, then the action of $\mathrm{Isom}(X)$ on $\bd X$ is M\"obius. Furthermore, the M\"obius group of $\bd X$ does not depend on the visual parameter $\e$.
 
Mineyev showed in \cite{M} that properties similar to the ones in the $\mathrm{CAT(-1)}$ setting can be achieved on boundaries of hyperbolic groups. The main technical point is the replacement of the word metric on the group by a new metric, herein called the \emph{hat metric}. First introduced by Mineyev and Yu in \cite{Min02}, the hat metric is the key geometric ingredient in their proof of the Baum - Connes conjecture for hyperbolic groups.

As before, let $\G$ be a non-elementary hyperbolic group. Let $X$ be the Cayley graph of $\G$ with respect to a finite generating set, and endow $X$ with the path metric $d$. The following result collects the properties of the hat metric which are relevant for this paper.

\begin{prop}[\cite{M, M-}]\label{mineyev's theorem}
There is a metric $\dhat$ on $X$ having the following properties:
\begin{itemize}
\item[(i)] $\dhat$ is $\G$-invariant, quasi-isometric to $d$, and roughly geodesic;
\item[(ii)] for each $o\in X$, the Gromov product $\la\cdot,\cdot\ra_o$ with respect to $\dhat$ on $X$ extends to a continuous map $\la\cdot,\cdot\ra_o: \overline{X}\times\overline{X}\to [0,\infty]$, where $\la \xi,\omega\ra_o=\infty$ if and only if $\xi=\omega\in \bd X$;
\item[(iii)] $\mathrm{(normalization)}$ for each $\e\in (0,1]$, $\exp(-\e\la\cdot,\cdot\ra_o)$ is a metric on $\bd X$ for all $o\in X$.
 \end{itemize}
\end{prop}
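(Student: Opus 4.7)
The plan is to reproduce Mineyev's flower-averaging construction from \cite{M, M-}, which produces $\dhat$ as a $\G$-equivariant smoothing of the word metric $d$. The construction has two stages. First, one builds a $\G$-equivariant bicombing on $X$ by averaging characteristic functions of ordinary geodesics against a carefully weighted family of probability measures (the ``flowers'') attached to each vertex, with weights that decay exponentially at a rate tied to the hyperbolicity constant $\delta$. Second, one uses this combing to define $\dhat(x,y)$ as a suitably normalized integral length, designed so that the Gromov $4$-point condition holds with zero additive defect.

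For part (i), $\G$-invariance is built in by equivariance of the averaging. Quasi-isometry to $d$ follows because the averaging operator only mixes values supported within bounded $d$-distance of ordinary geodesics: the exponential decay of the flower weights keeps the distortion uniformly bounded in terms of $\delta$. Rough geodesicity is then a standard consequence of being quasi-isometric to a geodesic metric space.

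For part (ii), the crucial point is that with respect to $\dhat$ the Gromov product satisfies the ultrametric-type inequality
\[\la x,z\ra_o\geq \min\{\la x,y\ra_o,\la y,z\ra_o\}\]
\emph{exactly}, with no additive slack — in other words, $(X,\dhat)$ is ``strongly hyperbolic''. Granted this, the extension to $\overline{X}\times \overline{X}$ by $\la \xi,\omega\ra_o=\sup\liminf \la x_i,y_j\ra_o$ over sequences $x_i\to\xi$, $y_j\to\omega$ is forced to be well-defined and continuous by the sharp inequality above. That $\la \xi,\omega\ra_o=\infty$ iff $\xi=\omega$ at infinity reflects the fact, inherited from hyperbolicity, that two sequences go to a common boundary point precisely when their pairwise Gromov products diverge.

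The main obstacle, and the part that fully justifies the whole flower-averaging apparatus, is property (iii). The claim that $\exp(-\e\la\cdot,\cdot\ra_o)$ is a genuine metric on $\bd X$ for \emph{every} $\e\in(0,1]$ is equivalent to the zero-defect ultrametric-type inequality used in (ii); once that inequality is established, the triangle inequality for $\exp(-\e\la\cdot,\cdot\ra_o)$ reduces to the elementary observation that $e^{-\e t}\leq e^{-\e a}+e^{-\e b}$ whenever $t\geq \min\{a,b\}$ and $\e\leq 1$. Thus everything is staked on eliminating the additive defect that is present in the word-metric Gromov product but absent in $\mathrm{CAT}(-1)$ models; this is accomplished by Mineyev through a careful choice of the decay rate and normalization of the flower weights, and is the step I would expect to require the most delicate work.
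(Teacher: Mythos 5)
The paper offers no proof of this proposition: it is quoted wholesale from Mineyev's work, so there is no internal argument to compare yours against. Judged on its own terms, your outline of the flower-averaging construction is a plausible caricature of what Mineyev does, but it hinges on a claim that is false. You assert that $(X,\dhat)$ satisfies the ultrametric-type inequality $\la x,z\ra_o\geq\min\{\la x,y\ra_o,\la y,z\ra_o\}$ \emph{exactly}, and that this zero-defect inequality is \emph{equivalent} to property (iii). It is not equivalent, and it cannot hold: on the boundary, a zero-defect inequality says precisely that $\exp(-\e\la\cdot,\cdot\ra_o)$ is an \emph{ultrametric}, which would force $\bd X$ to be totally disconnected. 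This fails already for cocompact surface groups, whose boundary is a circle. What (iii) actually requires is the strictly weaker ``exponential triangle inequality'' $e^{-\e\la x,z\ra_o}\leq e^{-\e\la x,y\ra_o}+e^{-\e\la y,z\ra_o}$ --- a metric need not be an ultrametric --- and that is what Mineyev's normalization delivers. The paper itself is consistent only with the weaker statement: in Section 7 it carries a positive hyperbolicity constant $\hat\delta$ for $\dhat$ and invokes $\la\xi,\omega\ra\geq\min\{\la g,\xi\ra,\la g,\omega\ra\}-\hat\delta$, which would be pointless bookkeeping if the defect were zero. Since your arguments for both (ii) and (iii) are, by your own account, staked on the zero-defect inequality, both collapse as written.

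Two smaller but genuine gaps. First, rough geodesicity of $\dhat$ is not ``a standard consequence of being quasi-isometric to a geodesic metric space'': quasi-isometry only yields quasi-geodesics, with multiplicative distortion, and there are metrics on $\Z$ bi-Lipschitz to the standard one that admit no rough (purely additive-error) geodesics --- for instance $d(m,n)=|m-n|+\sqrt{|m-n|}$ has no approximate midpoints. Rough geodesicity is a separate output of Mineyev's construction, not a formal consequence of (i)'s other clauses. Second, continuity of the extended Gromov product on all of $\overline{X}\times\overline{X}$ is one of the genuinely hard points: for the word metric the $\sup\liminf$ extension is well defined only up to an additive $2\delta$ and is in general discontinuous, so continuity cannot be dismissed as ``forced'' even if one had good control of the defect.
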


Thus, by passing from the path metric $d$ to the hat metric $\dhat$, we get a Gromov product which behaves just like the one in the $\mathrm{CAT(-1)}$ setting. The metrics on the boundary $\bd X=\bd \G$ are then simple and explicit, and they also have sharp properties. In the direction that concerns us, we have that the action on the boundary is M\"obius, whereas classically the action is quasi-M\"obius only. Recall, the identity element of $\G$ is our chosen basepoint.

\begin{cor}[\cite{M}]\label{look, now it's mobius}
Equip $\bd\G$ with a visual metric $\dhat_\e=\exp(-\e \la \cdot,\cdot\ra)$ defined by the Gromov product with respect to $\dhat$. Then $\G$ acts on $\bd \G$ by M\"obius homeomorphisms.
\end{cor}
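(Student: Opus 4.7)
The plan is to verify cross-ratio preservation directly, exploiting the explicit form of the visual metric. Taking logarithms, the cross-ratio of four distinct boundary points reads
\[\log(\xi_1,\xi_2;\xi_3,\xi_4) = -\e\bigl(\la \xi_1,\xi_3\ra + \la \xi_2,\xi_4\ra - \la \xi_1,\xi_4\ra - \la \xi_2,\xi_3\ra\bigr),\]
so the Möbius property for $g \in \G$ is equivalent to the $\G$-invariance of the alternating sum
\[\Phi(\xi_1,\xi_2,\xi_3,\xi_4) := \la \xi_1,\xi_3\ra + \la \xi_2,\xi_4\ra - \la \xi_1,\xi_4\ra - \la \xi_2,\xi_3\ra\]
on quadruples of distinct points of $\bd \G$.

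First I would check invariance on interior points $z_1,\ldots,z_4 \in X$. Writing out the Gromov product $\la\cdot,\cdot\ra = \la\cdot,\cdot\ra_o$ and using the $\G$-invariance of $\dhat$ from Proposition~\ref{mineyev's theorem}(i), a short computation gives
\[\la gz_i, gz_j\ra - \la z_i, z_j\ra = \tfrac{1}{2}\bigl(\phi_g(z_i) + \phi_g(z_j)\bigr), \qquad \phi_g(z) := \dhat(g^{-1}o, z) - \dhat(o, z).\]
Substituting into $\Phi(gz_1,\ldots,gz_4) - \Phi(z_1,\ldots,z_4)$, each $\phi_g(z_k)$ picks up coefficient $+\tfrac{1}{2}$ from one pairing and $-\tfrac{1}{2}$ from the other, and the difference collapses to zero. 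Thus $\Phi$ is already $\G$-invariant on quadruples of interior points.

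The remaining task is to extend this identity to the boundary. Here I would invoke Proposition~\ref{mineyev's theorem}(ii): the Gromov product $\la\cdot,\cdot\ra$ extends continuously to $\overline{X}\times \overline{X} \to [0,\infty]$, with the value $\infty$ occurring only on the diagonal of $\bd X$. Given distinct $\xi_1,\ldots,\xi_4 \in \bd \G$, I would approximate each $\xi_i$ by a sequence $z_i^{(n)} \in X$ with $z_i^{(n)}\to \xi_i$ in $\overline{X}$. For large $n$ the points $z_i^{(n)}$ are pairwise distinct; the translates $gz_i^{(n)}$ converge to the distinct points $g\xi_i$ since $g$ acts continuously on $\overline{X}$; and all six Gromov products appearing in $\Phi$, both before and after applying $g$, remain finite in the limit. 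Continuity then transfers the equality $\Phi(gz_1^{(n)},\ldots,gz_4^{(n)}) = \Phi(z_1^{(n)},\ldots,z_4^{(n)})$ to the boundary, giving cross-ratio preservation for $g$.

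Once Mineyev's theorem is granted, no real obstacle remains: the algebraic core is simply that any Busemann-type shift $\phi_g$ coming from the change of basepoint $o \leftrightarrow g^{-1}o$ is additive in its two arguments and therefore cancels out of the alternating cross-ratio combination, while the passage to the boundary is a routine continuity argument. All the geometric content is concentrated in Proposition~\ref{mineyev's theorem}(ii), which supplies a genuine (rather than merely quasified) Gromov product that extends continuously to the compactification — the feature that is automatic in the $\mathrm{CAT}(-1)$ setting recalled at the start of Section~\ref{this is recalled later} and which the hat metric achieves in general hyperbolic groups.
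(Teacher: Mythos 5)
Your argument is correct, and its computational heart is the same as the paper's: the identity $\la gz,gw\ra-\la z,w\ra=\tfrac{1}{2}\big(\phi_g(z)+\phi_g(w)\big)$, where $\phi_g(z)=\dhat(g^{-1}o,z)-\dhat(o,z)$ is the Busemann-type shift (in the paper's basepoint-$1$ notation, $\phi_g(x)=\lhat(g^{-1})-2\la g^{-1},x\ra$), followed by a passage to the boundary via the continuous extension of Proposition~\ref{mineyev's theorem}(ii). Where you diverge is in how the identity is exploited. You substitute it into the four-point alternating sum underlying the cross-ratio and let the $\phi_g$-terms cancel, verifying the M\"obius property straight from the definition. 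The paper instead substitutes it into the two-point geometric mean-value property $(*)$, which Lemma~\ref{alternate} has already shown to be equivalent to being M\"obius. The paper's packaging has a concrete payoff: exponentiating the two-point identity gives \eqref{longish formula} and hence the explicit metric derivative $|g'|^{(\e)}(\xi)=\exp\big(\e(2\la g^{-1},\xi\ra-\lhat(g^{-1}))\big)$, which is precisely what the rest of the paper consumes -- for the Radon - Nikodym derivative of the Hausdorff measure and for the cocycle $c_g(\xi,\omega)=\la g,\xi\ra-\la g,\omega\ra$. Your route proves the corollary as stated but leaves $|g'|$ implicit; to extract it you would have to rerun the limiting argument of Lemma~\ref{alternate}, so for the purposes of this paper the two-point version is the more economical one.
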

Note also that the M\"obius group of $\bd \G$ is independent of the choice of visual parameter $\e$. 

\begin{proof}
We verify that the geometric mean-value property $(*)$ holds for each $g\in \G$. A direct calculation shows that 
\[2\la gx, gw \ra-2\la x,w\ra=\big(\lhat(g^{-1})-2\la g^{-1},x\ra\big)+ \big(\lhat(g^{-1})-2\la g^{-1},w\ra\big)\qquad (x,w\in \G),\] 
where $\hat{l}(g):=\dhat(1,g)$ is the hat length of $g$. Letting $x\to \xi$ and $w\to\omega$, we get
\begin{align}\label{longish formula}
\dhat_\e^{\: 2}(g\xi,g\omega)=\exp\Big(\e\big(2\la g^{-1},\xi\ra-\lhat(g^{-1})\big)\Big)\:\exp\Big(\e\big(2\la g^{-1},\omega\ra-\lhat(g^{-1})\big)\Big)\:\dhat_\e^{\: 2}(\xi,\omega)
\end{align}
for all $\xi,\omega\in \bd\G$.
\end{proof}

Mineyev's hat metric straightens the outside while wrinkling the inside. Indeed, the previous corollary witnesses the fact that exchanging the path metric $d$ for the hat metric $\dhat$ improves the metric structure on the boundary. However, there is a price to be paid within the space $X$: while the path metric $d$ is geodesic, the hat metric $\dhat$ is only roughly geodesic. Recall, to say that $\dhat$ is \emph{roughly geodesic} ($^+$geodesic, in the language of \cite{M}) is to mean the following: there is a spatial constant $C\geq 0$ with the property that, for any points $x,y\in X$, there is a (not necessarily continuous) map $\gamma: [a,b]\to \R$ such that $\gamma(a)=x$, $\gamma(b)=y$, and $\gamma$ is a $C$-rough isometry, i.e., 
\[|s-t|-C\leq \dhat\big(\gamma(s),\gamma(t)\big)\leq |s-t|+C\] 
for all $s,t\in [a,b]$. Hyperbolicity is a quasi-isometry invariant for roughly geodesic spaces, so $\dhat$ is a hyperbolic metric on $X$.

Recent work of Blach\`ere, Ha\"issinsky and Mathieu shows that we still have Ahlfors regularity for the visual metrics coming from the hat metric. More precisely, \cite[Thm.2.3]{BHM} leads to the following:

\begin{prop}\label{Hausdorff new} Equip $\bd \G$ with a visual metric $\dhat_\e=\exp(-\e \la \cdot,\cdot\ra)$ defined by the Gromov product with respect to $\dhat$. If $\hat{D}_\e$ denotes the Hausdorff dimension of $\bd \G$, then $\hat{D}_\e=\hat{e}(\G)/\e$ and the $\hat{D}_\e$-dimensional Hausdorff measure is Ahlfors regular.
\end{prop}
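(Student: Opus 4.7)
The plan is to reduce the proposition to the Patterson--Sullivan theorem of Blach\`ere--Ha\"issinsky--Mathieu \cite{BHM} by verifying that Mineyev's hat metric provides an admissible geometric setting for their construction.

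First I would introduce the hat growth exponent
\[\hat{e}(\G) := \limsup_{n\to\infty} \frac{\log \#\{g \in \G : \lhat(g) \leq n\}}{n},\]
where $\lhat(g) = \dhat(1,g)$ is the hat length. I would check that $0 < \hat{e}(\G) < \infty$: since $\dhat$ is quasi-isometric to the word metric by Proposition~\ref{mineyev's theorem}(i), a $\dhat$-ball of radius $n$ sits inside a word ball of comparable radius, so finiteness is inherited from the classical growth exponent $e(\G)$. Positivity follows by the same quasi-isometry, using the non-elementarity of $\G$ to ensure exponential growth in the word metric and transferring the bound back.

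Next I would verify the hypotheses of \cite[Thm.2.3]{BHM}. Proposition~\ref{mineyev's theorem} supplies exactly what is needed: $(X, \dhat)$ is a proper, hyperbolic, roughly geodesic metric space; $\G$ acts on it properly, cocompactly, and by isometries (part (i)); the Gromov product $\la\cdot,\cdot\ra$ extends continuously to $\overline{X}\times\overline{X}$ (part (ii)); and $\dhat_\e = \exp(-\e\la\cdot,\cdot\ra)$ is a genuine visual metric on $\bd\G$ for each $\e \in (0,1]$ (part (iii)). The BHM theorem then yields a quasi-conformal density $\mu$ on $\bd\G$ associated to the critical exponent $\hat{e}(\G)$, which is Ahlfors regular with respect to $\dhat_\e$ in the sense that $\mu\big(B(\xi,r)\big) \asymp r^{\hat{e}(\G)/\e}$.

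To finish, I would promote Ahlfors regularity from $\mu$ to the Hausdorff measure itself. This is a classical argument: any Ahlfors $D$-regular Borel measure on a compact metric space forces the Hausdorff dimension to equal $D$ and is equivalent, up to multiplicative constants, to the $D$-dimensional Hausdorff measure. Applied to $\mu$, this gives $\hat{D}_\e = \hat{e}(\G)/\e$ together with $\mu_{\hat{D}_\e} \asymp \mu$, so the Hausdorff measure $\mu_{\hat{D}_\e}$ is itself Ahlfors regular. The main obstacle, modest as it is, lies in matching Mineyev's hat setup with the hypotheses of BHM; but the hat metric is engineered precisely so that this comparison is routine, and everything beyond that reduces to the standard comparability between Ahlfors regular measures and Hausdorff measures.
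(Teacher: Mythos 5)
Your proposal is correct and follows exactly the route the paper takes: the paper offers no argument beyond citing \cite[Thm.2.3]{BHM}, and your verification that Mineyev's hat metric (proper, hyperbolic, roughly geodesic, with a cocompact isometric $\G$-action and a continuously extending Gromov product) satisfies the hypotheses of that theorem, followed by the standard comparison between an Ahlfors regular measure and the Hausdorff measure of the corresponding dimension, is precisely the intended reduction. The extra details you supply (finiteness and positivity of $\hat{e}(\G)$ via quasi-isometry with the word metric) are sound and only make explicit what the paper leaves implicit.
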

Here 
\[\hat{e}(\G)=\limsup_{n\to \infty}\frac{\log\: \#\{g\in \G: \hat{l}(g)\leq n\}}{n}\]
is the growth exponent of $\G$ with respect to the hat length $\hat{l}$.

To conclude, Corollary~\ref{look, now it's mobius} and Proposition~\ref{Hausdorff new} fulfill the two desiderata announced at the beginning of the section. 

\begin{rem} In \cite{BHM}, the authors are concerned with another interesting metric on a hyperbolic group, namely the Green metric coming from a random walk. The Green metric is a more natural, and much easier to construct, replacement of the word metric than the hat metric. It would be interesting to know whether the Green metric leads to a metric structure on the boundary as nice as the one coming from the hat metric.
\end{rem}

\section{Proper isometric actions of hyperbolic groups on $L^p$-spaces}\label{hyperbolic case}
Again, $\G$ is a non-elementary hyperbolic group. Equip $\bd \G$ with a visual metric $\dhat_\e=\exp(-\e \la \cdot,\cdot\ra)$ defined by the Gromov product with respect to the hat metric $\dhat$. Then, as explained in the previous section, the action of $\G$ on $\bd\G$ falls under the framework developed in Section~\ref{M-actions}. Let us see what we obtain.

\subsection{A $\G$-invariant measure on $\bd\G\times \bd\G$} \label{the paragraph on measure}
While the Hausdorff dimension of $\bd\G$ depends on $\e$, the Hausdorff measure on $\bd\G$ does not, and we simply denote it by $\mu$. The M\"obius-invariant measure $\nu$ on $\bd\G\times \bd\G$ is also independent of the visual parameter, and it takes the form
\begin{align}\label{a group-invariant measure}
\D \nu(\xi,\omega)=\exp\big(2\hat{e}(\G)\:\la\xi,\omega \ra\big)\D\mu (\xi)\D\mu (\omega).
\end{align}
In particular, $\nu$ is $\G$-invariant. Since the diagonal of $\bd\G\times \bd\G$ is $\nu$-negligible, we may also view $\nu$ as a $\G$-invariant Radon measure on $\bd^2\G:=\bd\G\times \bd\G- \mathrm{diag}$.

The measure $\nu$ is a generalization of the Bowen--Margulis measure from the CAT$(-1)$ setting. Namely, consider the special case when $\G$ acts geometrically on a proper CAT$(-1)$ space $X$. If we forgo the visual metrics coming from the hat metric on the Cayley graph of $\G$, and we use instead the visual metrics coming from within $X$, then the M\"obius-invariant measure $\nu$ we obtain is the so-called \emph{Bowen--Margulis measure}. This has the form
\[\D \nu_\mathrm{BM}(\xi,\omega)=\frac{\D\mu_o (\xi)\D\mu_o (\omega)}{d_{\e,o}(\xi,\omega)^{2D_\e}}=\exp\big(2e(\G)\:(\xi,\omega)_o\big)\D\mu_o (\xi)\D\mu_o (\omega),\]
independent of the basepoint $o\in X$. Here $D_\e$ and $\mu_o$ denote the Hausdorff dimension, respectively the Hausdorff measure, with respect to the visual metric $d_{\e,o}=\exp(-\e(\cdot,\cdot)_o)$.

The existence of a $\G$-invariant measure on $\bd\G\times \bd\G$ analogous to the Bowen--Margulis measure was first established by Furman in \cite[Prop.1]{Fu} using a cohomological argument. Roughly speaking, Furman constructs the measure in the loose measurable sense whereas our $\nu$ is constructed in the sharp metric category. This grants $\nu$ some advantages: it is more explicit, it is much closer to the Bowen--Margulis measure, and it has the sharp properties needed for the construction of a proper isometric action on $L^p(\bd\G\times\bd\G)$.

\subsection{A cocycle for the $\G$-action on $\bd\G\times \bd\G$}\label{the paragraph on cocycle} By \eqref{longish formula}, the metric derivative of $g\in \G$ is
\begin{align}\label{here is the derivative}
|g'|^{(\e)}(\xi)=\exp\Big(\e\big(2\la g^{-1},\xi\ra-\lhat(g^{-1})\big)\Big),
\end{align}
and the cocycle $c$ becomes, up to a factor of $2\e$, 
\begin{align}\label{what a pretty cocycle}
g\mapsto c_g(\xi,\omega)=\la g,\xi\ra-\la g,\omega\ra.
\end{align}
We may interpret this attractive cocycle as follows. For $g\in \G$ and $\xi\in\bd\G$, let $\hat{\beta}(g,\xi):=2\la g,\xi\ra-\lhat(g)$. If we fix a boundary point $\xi$ and we view $\hat{\beta}$ as a function on the group, then $\hat{\beta}(\cdot,\xi)$ is the \emph{Busemann function} corresponding to $\xi$, and the difference map $(g,h)\mapsto \hat{\beta}(g,\xi)- \hat{\beta}(h,\xi)$ is the \emph{Busemann cocycle} with respect to $\xi$. If we fix a group element $g$ and we view $\hat{\beta}$ as a function on the boundary, then the difference map $(\xi,\omega)\mapsto \hat{\beta}(g,\xi)- \hat{\beta}(g,\omega)$ is, up to a factor of $2$, our cocycle $c_g$. We thus think of $c$ as the \emph{other Busemann cocycle}.

Let $p>\hat{D}_\e$; we recall that $\hat{D}_\e$ denotes the Hausdorff dimension of $(\bd \G, \dhat_\e)$. By Proposition~\ref{cocycle}, $c$ is a cocycle for the linear isometric action of $\G$ on $L^p(\bd\G\times\bd\G)$. The last ingredient is the following:

\begin{prop} The cocycle $c$ is proper. In fact, the growth of $g\mapsto \|c_g\|^p_{L^p(\nu)}$ is at least linear with respect to the (hat or word) length.
\end{prop}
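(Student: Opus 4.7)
The plan is to imitate the free-group computation of Proposition~\ref{properness for free}, with the exact equalities replaced by the hyperbolic $\delta$-approximations. Set $N := \hat{l}(g)$, and for integers $0 \leq i \leq N$ define the level sets $A_i := \{\xi \in \bd\G : i \leq \la g,\xi\ra < i+1\}$. These partition a full-measure subset of $\bd\G$, and the goal is to bound the double integral defining $\|c_g\|^p_{L^p(\nu)}$ from below by the sum $\sum_{i,j=0}^{N} |i-j|^p e^{-\hat{e}(\G)|i-j|}$, which is $\asymp_p N$ by exactly the same recurrence as in Proposition~\ref{properness for free}.

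The reduction requires two ingredients. First, by the Gromov four-point inequality for $\dhat$, for $\xi \in A_i$, $\omega \in A_j$ with $|i-j|$ exceeding some threshold $k_0 = k_0(\delta)$, we have $\la\xi,\omega\ra = \min\{i,j\} + O(\delta)$; hence $\exp(2\hat{e}(\G)\la\xi,\omega\ra) \asymp e^{2\hat{e}(\G)\min\{i,j\}}$ uniformly on such rectangles. Second, we need two-sided measure estimates $\mu(A_i) \asymp e^{-\hat{e}(\G) i}$, valid for $0 \leq i \leq N - O(1)$. The upper bound is immediate: the hyperbolic inequality forces $\{\xi : \la g,\xi\ra \geq i\}$ to have $\dhat_\e$-diameter $\cleq e^{-\e i}$, and Ahlfors regularity of $\mu$ (Proposition~\ref{Hausdorff new}) converts this into $\mu(\{\xi: \la g,\xi\ra \geq i\}) \cleq e^{-\e\hat{D}_\e i} = e^{-\hat{e}(\G) i}$.

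The main obstacle is the matching lower bound $\mu(A_i) \cgeq e^{-\hat{e}(\G) i}$, which requires locating a visual ball of radius $\asymp e^{-\e i}$ inside $A_i$. Since $\G$ is non-elementary, $g$ lies within bounded hat-distance of a bi-infinite geodesic, so there exists $\xi_0 \in \bd\G$ with $\la g,\xi_0\ra \cgeq N - O(1)$; a suitable visual annulus around $\xi_0$ fits inside $A_i$ for each $i$ up to $N - O(1)$, and the lower half of Ahlfors regularity yields the required bound. Combining the two ingredients,
\[\|c_g\|^p_{L^p(\nu)} \cgeq \sum_{\substack{0 \leq i,j \leq N - O(1) \\ |i-j| > k_0}} |i-j|^p\: e^{2\hat{e}(\G)\min\{i,j\}}\: \mu(A_i)\: \mu(A_j) \cgeq \sum |i-j|^p\: e^{-\hat{e}(\G)|i-j|} \asymp_p N,\]
which gives the desired linear lower bound, in both hat and word length since $\hat{l}\asymp l$.
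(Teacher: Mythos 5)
Your overall strategy---lower-bounding the kernel via the hyperbolic inequality, estimating the measures of the level sets of $\xi\mapsto\la g,\xi\ra$ by sandwiching shadows between visual balls and invoking Ahlfors regularity, and reducing to the free-group sum of Proposition~\ref{properness for free}---is the same as the paper's. The genuine divergence, and the genuine gap, occurs where you need level sets all the way up to $\lhat(g)-O(1)$ to carry the right amount of measure; this amounts to the claim $M(g):=\max_\xi\la g,\xi\ra\geq\lhat(g)-O(1)$. Your justification (``$g$ lies within bounded hat-distance of a bi-infinite geodesic, so there exists $\xi_0$ with $\la g,\xi_0\ra\geq N-O(1)$'') does not follow as stated: a bi-infinite (rough) geodesic through $g$ may pass near the basepoint, in which case one of its endpoints has Gromov product $O(\hat\delta)$ with $g$, and in general the two endpoints only satisfy $\la g,\xi_-\ra+\la g,\xi_+\ra\geq\lhat(g)-O(\hat\delta)$, which yields merely $M(g)\geq\lhat(g)/2-O(1)$. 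What rescues the claim is the coarse unimodality of $t\mapsto\dhat(1,\gamma(t))$ along a rough geodesic: at least one of the two rays emanating from $g$ must stay at distance $\geq\lhat(g)-O(\hat\delta)$ from the basepoint, and its endpoint works. So the fact is true, but it needs this extra argument (plus care with $\dhat$ being only roughly geodesic). The paper sidesteps the issue entirely: it proves only $\|c_g\|^p_{L^p(\nu)}\cgeq M(g)$, and then upgrades to $\cgeq\lhat(g)$ via the symmetrization $\|c_{g^{-1}}\|_{L^p(\nu)}=\|c_g\|_{L^p(\nu)}$ combined with the exact identity $\la g,\xi\ra+\la g^{-1},g^{-1}\xi\ra=\lhat(g)$, which gives $M(g)+M(g^{-1})\geq\lhat(g)$. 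That route is both slicker and safer; I recommend adopting it, or else writing out the unimodality argument in full.

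A second, more minor quantitative gap: your unit-width level sets $A_i=\{\xi: i\leq\la g,\xi\ra<i+1\}$ need not satisfy $\mu(A_i)\cgeq e^{-\hat e(\G)i}$. Ahlfors regularity controls balls, not annuli; writing $\mu(A_i)=\mu(\{\la g,\cdot\ra\geq i\})-\mu(\{\la g,\cdot\ra\geq i+1\})\geq c\,e^{-\hat e(\G)i}-C\,e^{-\hat e(\G)(i+1)}$, the right-hand side is positive only when $C/c<e^{\hat e(\G)}$, which is not guaranteed. The fix is exactly the paper's: use annuli of width $\rho$ with $\rho$ large enough, depending on the Ahlfors constants, that $c-C e^{-\hat e(\G)\rho}>0$. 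With these two repairs your computation goes through and yields the same linear lower bound (in hat length, hence in word length since $\lhat\asymp l$).
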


\begin{proof} 
Let $\hat{\delta}$ denote the hyperbolicity constant of $\dhat$. Put $K:=e^{\hat{e}(\G)}$. Then 
\begin{align*}
\|c_g\|^p_{L^p(\nu)}&=\iint |\la g,\xi\ra- \la g,\omega\ra|^p\: K^{\:2\la \xi,\omega\ra} \D \mu (\xi)\D \mu (\omega)\\
& \cgeq \iint |\la g,\xi\ra- \la g,\omega\ra|^p\: K^{\: 2\min\{\la g,\xi\ra, \la g,\omega\ra\}} \D \mu (\xi)\D \mu (\omega)
\end{align*}
by using the hyperbolic inequality $\la \xi,\omega\ra\geq \min\big\{\la g,\xi\ra, \la g,\omega\ra\big\}-\hat\delta$.

Let $M(g):=\max \{\la g,\xi \ra: \xi\in \bd\G\}$. First, we claim that $\mu\big(\{\xi\in \bd \G\: : \: \la g,\xi\ra\geq R\}\big)\asymp_{\e} K^{-R}$ for $0\leq R\leq M(g)-\hat\delta$. To prove the claim, pick $\omega\in \bd\G$ such that $\la g,\omega\ra=M(g) \geq R+\hat\delta$. Then
\begin{align}\label{shadow}
B(\omega, e^{-\e(R+\hat\delta)})\subseteq \{\xi\in \bd \G: \la g,\xi\ra\geq R\}\subseteq B(\omega, e^{-\e(R-\hat\delta)})
\end{align}
where the balls are closed, and taken with respect to the $\dhat_\e$ metric. Indeed, let $\xi\in B(\omega, e^{-\e(R+\hat\delta)})$; then $\la \xi,\omega\ra\geq R+\hat\delta$, hence $\la g,\xi\ra\geq \min\big\{\la g,\omega\ra, \la \xi,\omega\ra\big\}-\hat\delta\geq R$. This justifies the first inclusion in \eqref{shadow}. For the second one, let $\xi\in \bd\G$ with $\la g,\xi\ra \geq R$; then $\la \xi,\omega\ra\geq \min\big\{\la g,\xi\ra, \la g,\omega\ra\big\}-\hat\delta\geq R-\hat\delta$, hence $\dhat_\e(\xi,\omega)\leq e^{-\e(R-\hat\delta)} $. The claimed measure-theoretic estimate follows from \eqref{shadow} together with the Ahlfors regularity of $\mu$.

From the above claim, it follows that there exists a positive constant $\rho=\rho(\e)$ such that the sets
\[A_k:=\{\xi\in \bd \G\: : \: (k+1) \rho> \la g,\xi\ra\geq k \rho\}\]
satisfy $\mu(A_k)\asymp_{\e} K^{-k\rho}$ for $0\leq k\leq N(g)$, where $N(g):=\big\lfloor \big(M(g)-\hat\delta\big)/\rho\big\rfloor-1$. Splitting over the annular sets $A_k$, we have:
\begin{align*}
\|c_g\|^p_{L^p(\nu)} \cgeq \sum_{0\leq j<k\leq N(g)} \iint_{\begin{smallmatrix} \xi\in A_{k}\\ \omega\in A_{j} \end{smallmatrix}} |\la g,\xi\ra- \la g,\omega\ra|^p\: K^{\: 2\min\{\la g,\xi\ra, \la g,\omega\ra\}} \D \mu (\xi)\D \mu (\omega)\end{align*}
For $\xi\in A_k$ and $\omega\in A_{j}$ with $k>j$, we have $|\la g,\xi\ra- \la g,\omega\ra|=\la g,\xi\ra- \la g,\omega\ra\geq  (k-j-1)\rho$ and $\min\{\la g,\xi\ra, \la g,\omega\ra\}=\la g,\omega\ra\geq j\rho$. Hence
\begin{align*}
\|c_g\|^p_{L^p(\nu)}&\cgeq_{\e} \sum_{0\leq j<k\leq N(g)} (k-j-1)^p \:K^{\: 2j\rho}\:K^{-k\rho}\:K^{-j\rho}\\
&=K^{-\rho}\sum_{0\leq j\leq k\leq N(g)-1} (k-j)^p\: K^{-(k-j)\rho} \geq K^{-2\rho}(N(g)-1),
\end{align*}
where the last inequality follows from a simple recurrence, as in the proof of Lemma~\ref{properness for free}. Summarizing, we have that $\|c_g\|^p_{L^p(\nu)}\geq a\: M(g)-b$ for some positive constants $a=a(\e), b=b(\e)$.

Finally, we claim that $M(g)+M(g^{-1})\geq \lhat(g)$. As $\|c_{g^{-1}}\|_{L^p(\nu)}=\|-g^{-1}.\:c_g\|_{L^p(\nu)}=\|c_g\|_{L^p(\nu)}$, the desired lower bound
\[\|c_g\|^p_{L^p(\nu)}\geq \frac{a}{2}\:\lhat(g)-b\]
immediately follows. As for the claim, it is a consequence of the fact that $\la g,\xi\ra+\la g^{-1},g^{-1}\xi\ra=\lhat(g)$ for all $\xi\in \bd\G$. This can be seen by plugging in formula \eqref{here is the derivative} in the identity $|(g^{-1})'|=1/g.|g'|$. More directly, one can check that $\la g,x\ra+\la g^{-1},g^{-1}x\ra=\lhat(g)$ for all $x\in \G$. \end{proof}

\subsection{Hyperbolic dimension}\label{the paragraph on dimension} The best integrability exponent $p$ in sight is obtained by taking the visual parameter $\e$ to be $1$. Then we obtain a proper action of $\G$ on $L^p(\bd\G\times\bd\G)$ for every $p>\hat{e}(\G)$; recall, $\hat{e}(\G)$ is the growth exponent of $\G$ with respect to the hat metric $\dhat$. However, we can do slightly better. Consider the infimum of all Hausdorff dimensions of the boundary with respect to visual metrics $\exp(-\e \la \cdot,\cdot\ra)$ which come from some hat metric, i.e., a metric satisfying properties i), ii), and iii) of Theorem~\ref{mineyev's theorem}. This infimum is Mineyev's \emph{hyperbolic dimension} (cf. \cite[Section 10]{M}). The notion of hyperbolic dimension is inspired by Pansu's conformal dimension, and one can see that the hyperbolic dimension is at least as large as the the Ahlfors regular, conformal dimension mentioned in the introduction.

Let $\hbar(\G)$ denote the hyperbolic dimension of a non-elementary hyperbolic group $\G$. Then $\hbar(\G)$ is a non-negative real number equal to $0$ if $\G$ is virtually free, and greater or equal to $1$ otherwise \cite[Thm.22 \& Cor.23]{M}. 

\begin{ex}
Consider the rank-$1$ symmetric space $\mathbb{H}^n_K$, where $K=\R, \C, \mathbb{H}$ and $n\geq 2$, or $K=\mathbb{O}$ and $n=2$, normalized so that the maximal sectional curvature is $-1$. If $\G$ is a cocompact lattice in $\mathrm{Isom}(\mathbb{H}^n_K)$ then, by a result of Pansu \cite{P}, we get that $\hbar(\G)=nk+k-2$ where $k=\dim_\R K$. 
\end{ex}

With the notion of hyperbolic dimension at hand, our main result can be neatly stated as follows:

\begin{thm}\label{neat}
Let $\G$ be a non-elementary hyperbolic group. Then $\G$ admits a proper affine isometric action on $L^p(\bd\G\times\bd\G)$ for all $p>\hbar(\G)$. 
\end{thm}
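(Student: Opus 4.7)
The plan is to bundle the developments of the preceding sections into a single statement and then optimize over the choice of hat metric. For a fixed hat metric $\dhat$ on the Cayley graph of $\G$ and a visual parameter $\e\in (0,1]$, Corollary~\ref{look, now it's mobius} provides a M\"obius action of $\G$ on the compact metric space $(\bd\G,\dhat_\e)$, while Proposition~\ref{Hausdorff new} guarantees that the Hausdorff measure of dimension $\hat{D}_\e = \hat{e}(\G)/\e$ is Ahlfors regular. These are precisely the hypotheses of the framework constructed in Section~\ref{M-actions}, so we obtain a $\G$-invariant Radon measure $\nu$ on $\bd\G\times \bd\G$ (Lemma~\ref{split conformal}) together with the cocycle $c_g(\xi,\omega)=\la g,\xi\ra-\la g,\omega\ra$ taking values in $L^p(\bd\G\times \bd\G,\nu)$ for every $p > \hat{D}_\e$ (Proposition~\ref{cocycle}). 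The properness of this cocycle, established in Section~\ref{the paragraph on cocycle}, then yields a proper affine isometric action of $\G$ on $L^p(\bd\G\times\bd\G)$ for each such $p$.

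Now comes the optimization step, which is the only new ingredient. By definition of the hyperbolic dimension (Section~\ref{the paragraph on dimension}), $\hbar(\G)$ is the infimum of the Hausdorff dimensions $\hat{D}_\e$ taken over all hat metrics $\dhat$ on the Cayley graph and all visual parameters $\e \in (0,1]$. Thus, given any real $p > \hbar(\G)$, we may choose a pair $(\dhat,\e)$ such that $\hat{D}_\e < p$, and the preceding paragraph then delivers a proper affine isometric action of $\G$ on $L^p(\bd\G\times \bd\G)$. This completes the proof.

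There is no substantive obstacle, since all the technical work has been carried out earlier; the argument is purely a matter of combining the M\"obius framework with the definition of $\hbar(\G)$. The one point worth flagging for the reader is that the measure $\nu$, the underlying linear isometric representation, and the cocycle all depend on the choice of $(\dhat,\e)$: distinct exponents $p$ may require distinct constructions, so the theorem should be read as producing, for each $p > \hbar(\G)$, a proper affine isometric action, rather than as producing a single action that works simultaneously for all admissible $p$.
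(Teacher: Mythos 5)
Your proposal is correct and follows essentially the same route as the paper: the paper's own proof of this theorem is exactly the observation that the hyperbolic dimension $\hbar(\G)$ is the infimum of the Hausdorff dimensions $\hat{D}_\e$ over all metrics satisfying properties i)--iii) of Proposition~\ref{mineyev's theorem} and all visual parameters, so for any $p>\hbar(\G)$ one picks a pair $(\dhat,\e)$ with $\hat{D}_\e<p$ and invokes the machinery of Sections~\ref{M-actions} and~\ref{hyperbolic case}. Your closing remark that the measure, representation, and cocycle all depend on the chosen pair $(\dhat,\e)$ --- so that different exponents $p$ may require different actions --- is accurate and worth making explicit.
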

In particular, Theorem~\ref{neat} together with the Fisher - Margulis result that a group with property (T) is $L^{2+\e}$-rigid (see \cite[Thm.1.3]{BFGM}) imply the following consequence: if $\G$ is an infinite hyperbolic group with property (T), then $\hbar(\G)>2$.

\section{An $\ell^p$-cohomological interpretation}\label{lp-cohomology stuff}
In this section we prove Theorem~\ref{lp consequence} from the introduction.

\subsection{Besov spaces} Let us return to the setup of Section~\ref{M-actions}: $(X,d)$ is a compact metric space of Hausdorff dimension $D\in (0,\infty)$, whose $D$-dimensional Hausdorff measure $\mu_D$ is Ahlfors regular.

Following Bourdon and Pajot \cite{BP}, we consider the \emph{Besov space} $B_p(X)$. The Besov seminorm of a measurable function $f:X\to\R$ is given by
\begin{align*}
\|f\|_{B_p}=\Big(\iint \big|f(x)-f(y)\big|^p\: d^{\: -2D}(x,y)\D\mu_D(x)\D\mu_D(y)\Big)^\frac{1}{p}.
\end{align*}
Note that $\|f\|_{B_p}=0$ if and only if $f$ is a.e. constant. The Besov space $B_p(X)$ is the space of functions having finite Besov seminorm, modulo a.e. constant functions. Equipped with the induced Besov norm, $B_p(X)$ is a Banach space. There is an obvious isometric embedding
\begin{align*}
B_p(X) \; \longrightarrow \; L^p(X\times X, \nu), \qquad [f]\; \longmapsto\;  \big( (x,y)\mapsto f(x)-f(y) \big)
\end{align*}
where, we recall, $\D \nu(x,y)=d^{\: -2D}(x,y) \D\mu_D (x)\D\mu_D (y)$. For $p>D$, we also have a continuous embedding 
\[\mathrm{Lip}(X)\; \hookrightarrow B_p(X)\] 
where $\mathrm{Lip}(X)$ is the space of Lipschitz functions modulo constants, equipped with the induced Lipschitz norm. Indeed, if $f$ is Lipschitz on $X$ then $\|[f]\|_{B_p}\leq \|[f]\|_\mathrm{Lip}\|d\|_{L^p(\nu)}$, and Lemma~\ref{integrability of distance} says that $d\in L^p(X\times X,\nu)$ whenever $p>D$.

Now we see that the affine isometric action of the M\"obius group $\Mob(X)$ on the $L^p$-space $L^p(X\times X)$ is, in essence, an affine isometric action on the Besov space $B_p(X)$. More precisely, for each $p>D$, the map $g\mapsto \log |(g^{-1})'|$ is a (Lipschitz) cocycle for the linear isometric action of $\Mob(X)$ on $B_p(X)$.

Moving to the context of hyperbolic groups, recall that the boundary $\bd \G$ of a non-elementary hyperbolic group $\G$ is metrized by some visual metric $\dhat_\e=\exp(-\e \la \cdot,\cdot\ra)$. However, \eqref{a group-invariant measure} shows that the $\G$-invariant Besov norm on the boundary $\bd \G$ is given by 
\begin{align*}
\|f\|_{B_p}=\Big(\iint \big|f(\xi)-f(\omega)\big|^p\: \exp\big(2\hat{e}(\G)\:\la\xi,\omega \ra\big)\D\mu (\xi)\D\mu (\omega)\Big)^\frac{1}{p},
\end{align*}
independent of the choice of visual parameter $\e$. By \eqref{here is the derivative}, the cocycle $g\mapsto \log |(g^{-1})'|$ becomes $g\mapsto \hat{\beta}(g,\cdot)=2\la g,\cdot\ra-\lhat(g)$. This too could be called the Busemann cocycle (compare the discussion in Section~\ref{the paragraph on cocycle}), and it is a Lipschitz cocycle for any choice of visual metric $\dhat_\e=\exp(-\e \la \cdot,\cdot\ra)$. When we work modulo constant functions, as we do in the Lipschitz space $\mathrm{Lip}$ and in the Besov spaces $B_p$, this simple Busemann cocycle simplifies further to $g\mapsto [\la g,\cdot\ra]$. We thus have:

\begin{prop}\label{besov reinterpretation}
Let $\G$ be a non-elementary hyperbolic group. Then the linear isometric action of $\G$ on $B_p(\bd\G)$ admits $g\mapsto [\la g,\cdot\ra]$ as a proper cocycle for all $p>\hat{e}(\G)$.
\end{prop}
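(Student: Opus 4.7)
The plan is to reduce the proposition to the properness result for the $L^p$-cocycle $c_g(\xi,\omega) = \la g,\xi\ra - \la g,\omega\ra$ established in Section~\ref{the paragraph on cocycle}, by invoking the canonical isometric embedding of the Besov space into $L^p(\bd\G \times \bd\G, \nu)$.

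First, I would verify that $g \mapsto [\la g, \cdot \ra]$ makes sense as a cocycle with values in $B_p(\bd\G)$ for every $p > \hat{e}(\G)$. From \eqref{here is the derivative}, one has $\log |g'|^{(\e)} = \e\bigl(2\la g^{-1}, \cdot \ra - \lhat(g^{-1})\bigr)$; since $|g'|^{(\e)}$ is Lipschitz by Lemma~\ref{Lipschitz} and bounded away from zero by compactness of $\bd\G$, its logarithm is Lipschitz, and hence so is $\la g, \cdot \ra$ after replacing $g$ by $g^{-1}$. The continuous embedding $\mathrm{Lip}(\bd\G) \hookrightarrow B_p(\bd\G)$ for $p > \hat{e}(\G) = \hat{D}_1$, noted at the start of Section~\ref{lp-cohomology stuff}, then places $[\la g, \cdot \ra]$ inside $B_p(\bd\G)$. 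The cocycle identity $[\la gh, \cdot \ra] = [\la g, \cdot \ra] + g.[\la h, \cdot \ra]$ follows by unwinding the chain rule \eqref{chain rule} for the metric derivatives $|g'|^{(\e)}$: this yields $\hat\beta(gh, \xi) = \hat\beta(g, \xi) + \hat\beta(h, g^{-1}\xi)$, which descends to the claimed Besov identity after discarding the additive constants $\lhat(\cdot)$.

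Next, the canonical map $[f] \mapsto \bigl((\xi,\omega) \mapsto f(\xi) - f(\omega)\bigr)$ is, by the very definition of the Besov seminorm together with the explicit form \eqref{a group-invariant measure} of $\nu$, an isometric embedding of $B_p(\bd\G)$ into $L^p(\bd\G\times\bd\G, \nu)$. Under this embedding, $[\la g, \cdot \ra]$ is sent precisely to $(\xi,\omega) \mapsto \la g, \xi\ra - \la g, \omega\ra$, which is the cocycle $c_g$ studied in Section~\ref{the paragraph on cocycle}. Hence $\|[\la g, \cdot \ra]\|_{B_p} = \|c_g\|_{L^p(\nu)}$, and the lower bound $\|c_g\|^p_{L^p(\nu)} \geq \tfrac{a}{2}\lhat(g) - b$ proved there transfers verbatim to produce the properness of the Besov cocycle.

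The genuine technical work --- the linear-in-$\lhat(g)$ lower bound on $\|c_g\|_{L^p(\nu)}$, obtained via the hyperbolic inequality for the Gromov product, shadow estimates afforded by the Ahlfors regularity of $\mu$, and the symmetry identity $\la g, \xi\ra + \la g^{-1}, g^{-1}\xi\ra = \lhat(g)$ --- has already been completed in Section~\ref{the paragraph on cocycle}. Consequently, there is no new obstacle here: the proof is an essentially formal repackaging of that estimate into $\ell^p$-cohomological language, with the only nontrivial step being the unambiguous identification of the Besov cocycle with the image of $c_g$ under the isometric embedding.
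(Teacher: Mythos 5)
Your proposal is correct and follows essentially the same route as the paper: identify the image of $[\la g,\cdot\ra]$ under the isometric embedding $B_p(\bd\G)\hookrightarrow L^p(\bd\G\times\bd\G,\nu)$ with the cocycle $c_g(\xi,\omega)=\la g,\xi\ra-\la g,\omega\ra$, note that $\la g,\cdot\ra$ is Lipschitz (being, up to constants, $\tfrac{1}{2\e}\log|(g^{-1})'|^{(\e)}$), and transfer the linear lower bound on $\|c_g\|^p_{L^p(\nu)}$ already proved in Section~\ref{the paragraph on cocycle}. No gaps.
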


\subsection{$\ell^p$-cohomology in degree one} Let $\G$ be a finitely generated group, and consider the Cayley graph of $\G$ with respect to a finite generating set. We let
\[E_p(\G)=\bigg\{\phi:\G\to \R : \|\phi\|_{E_p}=\Big(\sum_{x \bullet\!\textrm{---}\!\bullet y}|\phi(x)-\phi(y)|^p\Big)^\frac{1}{p}<\infty\bigg\}\]
be the linear space of functions on $\G$ with $p$-summable edge differential. The \emph{first $\ell^p$-cohomology group} $H^1_{(p)}(\G)$ is defined as the quotient of $E_p(\G)$ by $\ell^p\G+\R$.

The $E_p$-seminorms with respect to two Cayley graphs of $\G$ are comparable, meaning that the linear space $E_p(\G)$ and the isomorphism type of the seminormed space $(E_p(\G), \|\cdot\|_{E_p})$ are both independent of the choice of Cayley graph for $\G$. Furthermore, the $E_p$-seminorm is $\G$-invariant. All these features are inherited by $H^1_{(p)}(\G)$, equipped with the induced norm. If $\G$ is non-amenable, then $H^1_{(p)}(\G)$ is a Banach space.

Now let us turn again to non-elementary hyperbolic groups. Applying a beautiful result of Bourdon and Pajot \cite[Thm.0.1, Thm.3.4]{BP} to our situation, we obtain the following fact:

\begin{prop}\label{lp versus besov}
 Let $\G$ be a non-elementary hyperbolic group. Then the following hold:
\begin{itemize}
\item[(i)] for each $\phi\in E_p(\G)$, the boundary extension $\phi_\infty(\xi)=\lim_{x\to \xi} \phi(x)$ is a.e. defined on $\bd \G$;
\item[(ii)] the boundary extension induces a $\G$-equivariant Banach space isomorphism $H^1_{(p)}(\G)\to B_p(\bd\G)$, given by $[\phi]\mapsto [\phi_\infty]$.
\end{itemize}
\end{prop}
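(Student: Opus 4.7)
The plan is to deduce this proposition directly from the Bourdon--Pajot theorem \cite{BP}, once we verify that our boundary setup fits their framework. Their theorem concerns a Gromov hyperbolic space (with bounded geometry) whose boundary carries a visual metric for which the corresponding Hausdorff measure is Ahlfors regular. In our setting, we take the Cayley graph of $\G$ equipped with Mineyev's hat metric $\dhat$; by Proposition~\ref{mineyev's theorem} and Proposition~\ref{Hausdorff new}, the resulting visual metric $\dhat_\e$ on $\bd\G$ and its Hausdorff measure $\mu$ satisfy exactly the hypotheses required. In particular, one is free to pick the visual parameter $\e$ that is most convenient, since all of $H^1_{(p)}(\G)$, $B_p(\bd\G)$ and the boundary-extension map are intrinsic to the data and do not depend on this choice.

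First I would address part i). For a fixed boundary point $\xi$, pick a roughly geodesic ray $(x_n)$ in $(\G,\dhat)$ converging to $\xi$ and write
\[\phi(x_m)-\phi(x_n)=\sum_{k=n}^{m-1}\big(\phi(x_{k+1})-\phi(x_k)\big).\]
A single ray will not yield summability from $\phi\in E_p(\G)$, but by averaging over the family of rays landing in a small boundary ball and exploiting Ahlfors regularity of $\mu$, one shows via a Borel--Cantelli / maximal-function argument that the radial limit $\phi_\infty(\xi)$ exists for $\mu$-a.e.\ $\xi$, and that different approximating sequences yield the same limit almost everywhere. The hyperbolicity of $\dhat$ ensures that shadows of balls in $\G$ behave like metric balls in $\bd\G$, which is precisely what converts edge-summability on the group into almost-everywhere convergence at the boundary.

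The main content is part ii), the norm equivalence between the induced norm on $E_p(\G)/(\ell^p\G+\R)$ and $\|\phi_\infty\|_{B_p}$. Using Mineyev's hat metric and the Ahlfors regular measure $\mu$, the Besov double integral on $\bd\G$ decomposes, via the shadow lemma, into a sum over pairs of group elements $(x,y)$ weighted by shadow measures and a factor $\exp\big(2\hat{e}(\G)\la x,y\ra\big)$; this sum is in turn comparable to $\sum_{x\bullet\!\textrm{---}\!\bullet y}|\phi(x)-\phi(y)|^p$ thanks to the roughly geodesic nature of $\dhat$ and the fact that consecutive vertices along a $\dhat$-geodesic sit a bounded number of Cayley edges apart. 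This comparison yields both the boundedness of $[\phi]\mapsto[\phi_\infty]$ and a reverse inequality, hence injectivity; $\G$-equivariance is immediate from the definitions of the linear $\G$-actions on the two sides. The hardest step, and the one that truly requires the Bourdon--Pajot machinery, is the surjectivity -- constructing, for a given Besov class on the boundary, a preimage $\phi\in E_p(\G)$ whose radial limits recover it. This is accomplished in \cite{BP} by a Poisson-type extension built from the shadow structure, and it is here that Ahlfors regularity of $\mu$ and the sharp visual form of $\dhat_\e$ are used in the most delicate way.
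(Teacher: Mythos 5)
Your proposal follows exactly the paper's route: the proposition is obtained by citing Bourdon--Pajot \cite[Thm.0.1, Thm.3.4]{BP}, after observing that Mineyev's hat metric (Proposition~\ref{mineyev's theorem}) together with the Ahlfors regularity of the Hausdorff measure on $(\bd\G,\dhat_\e)$ (Proposition~\ref{Hausdorff new}) places the boundary in their framework. The additional sketch you give of the internal Bourdon--Pajot machinery (radial limits via shadows, the norm comparison, the extension operator for surjectivity) is accurate but not something the paper reproves.
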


In light of the equivariant isomorphism between the first $\ell^p$-cohomology group and the boundary Besov space, Proposition~\ref{besov reinterpretation} can be stated as follows:

\begin{thm}\label{final theorem}
Let $\G$ be a non-elementary hyperbolic group. Then the linear isometric action of $\G$ on $H^1_{(p)}(\G)$ admits $g\mapsto [\la g,\cdot\ra]$ as a proper cocycle for all $p>\hat{e}(\G)$.
\end{thm}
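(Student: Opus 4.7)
The plan is to derive Theorem \ref{final theorem} by transporting the proper Besov cocycle of Proposition \ref{besov reinterpretation} along the Bourdon--Pajot isomorphism of Proposition \ref{lp versus besov}. Since the latter is a $\G$-equivariant isomorphism of Banach spaces, both the cocycle equation and properness are automatically preserved. The substantive task is therefore to identify the transported cocycle as $g\mapsto[\la g,\cdot\ra]$, now interpreted inside $H^1_{(p)}(\G)$.

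Concretely, for $p>\hat{e}(\G)$, let $\Phi:H^1_{(p)}(\G)\isomto B_p(\bd\G)$, $[\phi]\mapsto[\phi_\infty]$, be the Bourdon--Pajot isomorphism. Applying $\Phi^{-1}$ to the Besov cocycle $g\mapsto[\la g,\cdot\ra]_{B_p(\bd\G)}$ from Proposition \ref{besov reinterpretation} yields a proper cocycle on $H^1_{(p)}(\G)$; by the injectivity of $\Phi$, it agrees with $g\mapsto[\la g,\cdot\ra|_\G]$ provided that (a) $\la g,\cdot\ra|_\G\in E_p(\G)$, and (b) the boundary extension of $x\mapsto\la g,x\ra$ is a.e.\ equal to $\xi\mapsto\la g,\xi\ra$. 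For (b), Proposition \ref{mineyev's theorem}(ii) gives the literal continuous extension of the hat Gromov product from $\G\times\G$ to $\overline{\G}\times\overline{\G}$, so the boundary extension matches the boundary Gromov product on all of $\bd\G$, not merely almost everywhere.

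The main obstacle is (a), the $p$-summability of the edge differential $|\la g,x\ra-\la g,y\ra|$ over word-adjacent $x,y\in\G$. I would bound each differential uniformly using the identity $\la g,x\ra=\tfrac12\bigl(\lhat(g)+\lhat(x)-\dhat(g,x)\bigr)$ together with the quasi-isometric equivalence of $d$ and $\dhat$. The key volumetric input is then that the edges on which this differential is non-negligible concentrate on a hyperbolic tube around the $\dhat$-geodesic from $1$ to $g$, whose exponential growth is governed precisely by $\hat{e}(\G)$. This matches the range $p>\hat{e}(\G)$ appearing in Proposition \ref{besov reinterpretation}, and once (a) is in hand the identification of the pulled-back cocycle is automatic and the theorem follows.
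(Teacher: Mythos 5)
Your overall route is exactly the paper's: Theorem~\ref{final theorem} is obtained by transporting the proper Besov cocycle of Proposition~\ref{besov reinterpretation} through the $\G$-equivariant isomorphism of Proposition~\ref{lp versus besov}, which automatically preserves the cocycle identity and properness. (The paper's own ``proof'' is essentially that one sentence.) You are also right that the only substantive content beyond the transport is the identification of the pulled-back cocycle with $g\mapsto[\la g,\cdot\ra|_\G]$, and your point (b) is fine: by Proposition~\ref{mineyev's theorem}(ii) the hat Gromov product extends continuously to $\overline{X}\times\overline{X}$, so the boundary extension of $x\mapsto\la g,x\ra$ is $\xi\mapsto\la g,\xi\ra$ everywhere, not just a.e. Note that the paper does not verify your point (a) either -- it is implicit in the assertion that the theorem is a restatement of Proposition~\ref{besov reinterpretation}, and the closing remark that a ``direct proof'' on the group side should exist is an acknowledgment that this step is not carried out. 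So you have put your finger on the right spot; without (a) one still gets a proper cocycle, namely $g\mapsto\Phi^{-1}[\la g,\cdot\ra|_{\bd\G}]$, but not the explicit formula in the statement.

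The gap is in your sketch of (a). A uniform bound on the edge differential combined with the claim that the non-negligible edges ``concentrate on a hyperbolic tube around the geodesic $[1,g]$'' cannot be the correct mechanism: if $|\la g,x\ra-\la g,y\ra|$ were supported on a tube of bounded radius around $[1,g]$, then $\|\la g,\cdot\ra\|_{E_p}^p\asymp\lhat(g)$ would hold for \emph{every} $p\geq 1$, and the hypothesis $p>\hat{e}(\G)$ would play no role. That picture is accurate precisely for free groups (where the differential of $(g,\cdot)$ is supported on the edges of $[1,g]$ itself, cf.\ Section~\ref{free group case}), but for a non-virtually-free $\G$ it fails: the soft hyperbolic inequality only gives $|\la g,x\ra-\la g,y\ra|\leq\hat\delta$ once $\lhat(x)$ is large, which is a uniform bound valid far from the geodesic and is useless for summability. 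What is actually needed is a \emph{pointwise decay} estimate of CAT$(-1)$ type, roughly $|\la g,x\ra-\la g,y\ra|\cleq\exp\big(-(\lhat(x)-\la g,x\ra)\big)$, i.e.\ exponential decay in the distance from $x$ to $[1,g]$; balancing this against the $\asymp\lhat(g)\,e^{\hat{e}(\G)r}$ count of group elements at distance $r$ from the geodesic gives $\sum_r e^{(\hat{e}(\G)-p)r}$, which converges exactly when $p>\hat{e}(\G)$ -- this is where the exponent threshold genuinely enters, and it shows the differential does \emph{not} concentrate on any fixed tube. Proving that decay estimate for Mineyev's hat product (it is not a formal consequence of $\hat\delta$-hyperbolicity) is the real work in a direct argument, and your proposal does not supply it.
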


Note that the cocycle in Theorem~\ref{final theorem} is given by equivalence classes of functions on $\G$, whereas the similar-looking cocycle in Proposition~\ref{besov reinterpretation} is given by equivalence classes of functions on $\bd\G$. There should be a direct proof for Theorem~\ref{final theorem}, in fact one involving the Gromov product $(\cdot, \cdot)$ with respect to the word metric on $\G$ rather than the hat Gromov product $\la\cdot, \cdot\ra$.

\section*{Acknowledgements}
Part of this work was carried out at the University of Victoria (Canada), where I was supported by a Postdoctoral Fellowship from the Pacific Institute for the Mathematical Sciences (PIMS). I thank Yves de Cornulier and Guoliang Yu for comments on a preliminary version of this paper. I am particularly grateful to Marc Bourdon for pointing out that my construction has an $\ell^p$-cohomological interpretation. Finally, I thank the referee for useful remarks.

\end{document}